\documentclass[12pt]{elsarticle}
\usepackage{amssymb}
\usepackage{amsmath}
\usepackage{amsfonts,dsfont}
\usepackage{algorithm}
\usepackage{algorithmic}
\usepackage{graphicx}
\makeatletter
\def\ps@pprintTitle{%
 \let\@oddhead\@empty
 \let\@evenhead\@empty
 \def\@oddfoot{}%
 \let\@evenfoot\@oddfoot}
\makeatother

\usepackage{tikz}
\usetikzlibrary{positioning,chains,fit,shapes,calc}
\usetikzlibrary{decorations.pathreplacing}
\usetikzlibrary{arrows,positioning}
\usepackage{mathtools}

\usepackage{hyperref}
\newcommand{\arxiv}[1]{\href{http://www.arXiv.org/abs/#1}{arXiv:#1}}
\usepackage{tikz}
\usetikzlibrary{arrows}
\newcommand {\junk}[1]{}
\usepackage{enumerate}
\usepackage{appendix}



\usepackage{amsthm}


\newtheorem{theorem}{Theorem}[section]

\newtheorem{proposition}[theorem]{Proposition}

\theoremstyle{definition}

\newtheorem{example}[theorem]{Example}

\newtheorem{problem}{Problem}[section]

\def\R{{\mathbb R}}
\def\Nat{{\mathbb N}}
\def\Z{{\mathbb Z}}
\def\Rmax{\R_{\max}}
\usepackage{csquotes}
\def\Rmax{\mathbb{R}_{\max}}
\def\Rmin{\mathbb{R}_{\min}}
\def\cS{\mathcal{S}}
\def\cT{\mathcal{T}}

\begin{document}

\begin{frontmatter}



\setcounter{footnote}{1}
\title{Tropical pseudolinear and pseudoquadratic optimization as parametric mean-payoff games}

\tnotetext[t1]{
The work of S. Sergeev was supported by EPSRC grant EP/P019676/1. This work originated as an M.Sci. project of J. Parsons under the supervision of S. Sergeev and was developed during the research visit of the third author to Birmingham, supported by National Natural Science Foundation of China, grant no. 11901486}

\author[rvt1]{Jamie Parsons}
\ead{jamie.parsons@hotmail.co.uk}

\author[rvt1]{Serge{\u\i} Sergeev\corref{cor}}
\ead{s.sergeev@bham.ac.uk}

\author[rvt2]{Huili Wang}
\ead{huiliwang77@163.com}  

\address[rvt1]{University of Birmingham, School of Mathematics, 
Edgbaston B15 2TT, UK.}
\address[rvt2]{School of Sciences,
Southwest Petroleum University, Chengdu, Sichuan 610500, China
}

\cortext[cor]{Corresponding author. Email: s.sergeev@bham.ac.uk}

\begin{abstract} We apply an approach based on parametric mean-payoff games to develop bisection and Newton schemes for solving problems of tropical pseudolinear and pseudoquadratic optimization with general two-sided constraints.   
\end{abstract}

\begin{keyword}
Tropical, mean-payoff games, optimization
\vskip0.1cm {\it{2020 AMS Classification:}} 15A80, 90C26, 91A46.
\end{keyword}

\journal{????????????????}






\end{frontmatter}


\section{Introduction}


Tropical linear algebra is a relatively new area of mathematics, having only been studied in depth since around 1960. Cuninghame-Green was one of the pioneers in the area, with many of his results being presented in \cite{MiniMax}. Since then, a number of mathematicians have further developed the theory and applications of tropical linearity, and it has enjoyed prominence and influence in several mathematical areas such as linear algebra, algebraic geometry and combinatorial optimization. Tropical linear algebra gives us the ability to write seemingly non-linear problems in a linear fashion using tropical linear operators, allowing an algebraic encoding of many combinatorial problems and providing a framework with which some discrete problems can be modelled \cite{baccelli,MainBook}.


Optimization problems over tropical linear algebra have been studied since 1970's and 1980's: see, e.g., K.~Zimmermann~\cite{KZim} and U. Zimmermann~\cite{UZim:81}. A crucial step was made by Butkovi\v{c}~\cite{MainBook}, who developed a bisection scheme for solving tropical linear programming.  Krivulin~\cite{KrivPseudolin-12,KrivPseudolin-14, KrivOurVolume-14, KrivSurvey-14, KrivUnconstr-15, KrivPseudoquad-15, KrivLocation-17} made a big contribution to the area by solving a multitude of what we term as tropical pseudolinear and tropical pseudoquadratic optimization problems, albeit with special constraints.  The connection between tropical linear algebra and mean payoff games was discovered by Akian, Gaubert and Guterman~\cite{TropicalMPG}, and this connection was further applied to tropical linear-fractional programming by Gaubert et al.~\cite{BiNewt}. The present paper aims to develop the tropical pseudolinear and pseudoquadratic optimization by considering a more general form of constraints and applying to it the connection to mean-payoff games discovered in~\cite{TropicalMPG}, following the approach of~\cite{BiNewt}.

We now introduce the tropical linear algebra formally.
Tropical semiring is the set $ \Rmax = \R \cup \{- \infty\}$ equipped with the operations $(\oplus, \otimes)$ defined by
$a \oplus b = \max(a,b)$ and
$a \otimes b = a + b$. 
Algebraically, ($\Rmax, \oplus, \otimes$) is a commutative idempotent semifield, meaning in particular that $\oplus$ and $\otimes$ are commutative, associative and satisfy the distributive law. Note that the $\oplus$ identity is $-\infty$ and the $\otimes$ identity is $0$. Many of the tools and operations used in usual linear algebra can be applied in tropical linear algebra.  An important difference with rings is that $(\mathbb{R}\cup
\{-\infty\}, \oplus)$ does not form a group and thus there is no straightforward subtraction in tropical algebra. Symmetrization over tropical algebra has been known for a long time~\cite{baccelli}, being a kind of substitute for this shortcoming. Also we can use the idempotent property of $\oplus$: for all $x \in \Rmax$ $x \oplus x  = x$.

The definition of $\oplus$ and $\otimes$ is then extended to include matrices and vectors. 
  Suppose that $A=(a_{ij})$, $B=(b_{ij})$, $C=(c_{ij})$, where  $a_{ij}$, $b_{ij}$, $c_{ij}\in \Rmax$. When $A$ and $B$ are of the same size, $C = A \oplus B$ if $c_{ij} = a_{ij} \oplus b_{ij}$ for all $i,j$. 
 For matrices $A$ and $B$ of compatible size, $ D = A \otimes B $ if 
$d_{ij} =\bigoplus a_{ik} \otimes b_{kj}= \text{max}_{k} (a_{ik} + b_{kj})$.

The unit matrix, $I$, is defined as a square matrix whose diagonal entries are $0$, with all off-diagonal entries being $-\infty$. 

For $a\in\R\cup\{-\infty\}\cup \{+\infty\}$ we
define its conjugate $a^-$ as $-a$ if $a\in\R$, $+\infty$
if $a=-\infty$, and $-\infty$ if $a=+\infty$. This definition is easily extended to vectors: for a column vector $x=(x_i)$ its conjugate is defined as a row vector $x^-=(x_i^-)$. 

The ideas and methods of tropical linear algebra and mean-payoff games will help us to solve the following main problems considered in this paper.

\begin{problem}[Pseudolinear optimization with two-sided constraints]
\label{prob:mainpseudolin}
Given $p\in \Rmax^n$, $q\in(\R\cup\{+\infty\})^n$, $U,V\in\Rmax^{m\times n}$, $b,d\in\Rmax^m$, find 
\begin{equation*}
    \begin{aligned}
& \underset{x\in\R^n}{\min}
& & x^-\otimes p \oplus q^-\otimes x\\
& \text{subject to}
& & U\otimes  x \oplus b \leq V\otimes x \oplus d.
    \end{aligned}
\end{equation*}
and at least one finite $x\in\R^n$ that attains the minimum.
\end{problem}

\begin{problem}[Pseudoquadratic optimization with two-sided constraints]
\label{prob:main}
Given $C\in\Rmax^{n\times n}$, $p\in \Rmax^n$, $q\in(\R\cup\{+\infty\})^n$, $U,V\in\Rmax^{m\times n}$, $b,d\in\Rmax^m$, find 
\begin{equation*}
    \begin{aligned}
& \underset{x\in\R^n}{\min}
& & x^-\otimes C\otimes x \oplus x^-\otimes p \oplus q^-\otimes x\\
& \text{subject to}
& & U\otimes  x \oplus b \leq V\otimes x \oplus d.
    \end{aligned}
\end{equation*}
and at least one finite $x\in\R^n$ that attains the minimum.
\end{problem}

To our knowledge, these problems have not been considered before, and their algorithmic solution based on parametric mean-payoff games, inspired by the solution given by Gaubert et al.~\cite{BiNewt} for the tropical linear-fractional programming, will be suggested in the present paper for the first time.
The problems with the same objective function as Problem~\ref{prob:mainpseudolin} but more special constraints were studied by Krivulin~\cite{KrivPseudolin-14, KrivLocation-17} 
see also \cite{KrivPseudolin-12} and  \cite{KrivZim-13} where the objective fuction is strongly related to the one in~Problem~\ref{prob:mainpseudolin}. Problems with similar objective functions as in Problem~\ref{prob:main}
(in particular, containing the pseudoquadratic term $x^-\otimes C\otimes x$) were also studied by Krivulin~\cite{KrivOurVolume-14,KrivPseudoquad-15,KrivPseudoquad-17}. However, the problems considered in these works have more special constraints than Problems~\ref{prob:mainpseudolin} and ~\ref{prob:main}. In return for less general constraints, a comprehensive and concise description of both optimal value and the whole solution set is offered in all of the above references.

In contrast to that approach, we are not interested to describe all solutions. Although it is possible to achieve such a description following, for example, the double description algorithm by Allamigeon et al.~\cite{AGG10}, it is much more complicated than in the case of special systems of constraints, and we are not intending to do it here.

As in the case of the tropical linear or tropical linear-fractional programming~\cite{MainBook, BiNewt}, there is a clear geometric motivation to consider the problems posed above, since the constraint $U\otimes x \oplus b \leq V\otimes x \oplus d$ is a general form of two-sided tropical affine constraints, that is, such constraints that describe the tropical polyhedra.

We now show how this type of pseudo-quadratic optimization problem may arise in practice, by considering an application to project scheduling, similar to the one described by Krivulin~\cite{KrivOurVolume-14,KrivPseudoquad-15,KrivPseudoquad-17}, but also including disjunctive constraints, which were not considered in those works.

Suppose a particular project involves a set of $n$ activities that have to be completed. Each activity has an initiation time $x_i$ and a completion time $y_i$. We define the time it takes to complete each activity to be its flow time, given by $y_i-x_i$. 

We have ``start-to-finish'' constraints, that activities cannot be completed until specified times have elapsed after the initiation of other constraints, and that the activities are completed as soon as possible within these constraints. This means that $x$ and $y$
 should satisfy the inequalities $x_j+c_{ij}\leq y_i$ for all $i$ and $j$, where $c_{ij}=-\infty$ if there is no such constraint for some $i$
 and $j$. For the activity $j$ to start as soon as possible, we should have $\max_j(c_{ij}+x_j)=y_i$, and hence we obtain 
 $y_i-x_i=-x_i+\max_j(c_{ij}+x_j)$ for the flow time of task $i$. 
We will be also interested to minimise $p_i-x_i$ and $x_i-q_i$ for each task $i$, which means that we would like task $i$ not to delay too much after $q_i$ and not to start too much in advance before $p_i$. Note that in the project scheduling practice, one is interested either in minimising the greatest flow times or in minimising the greatest of the above mentioned time differences. However, it will be more mathematically convenient for us to consider them together and pose the objective to minimise the greatest of all flow times and the time differences $p_i-x_i$ and $x_i-q_i$. Thus we obtain the objective
 $$
 \min_{x\in\R^n} x^-\otimes C\otimes x\oplus x^-\otimes p\oplus q^-\otimes x.
 $$
This objective allows us to consider both kinds of objectives at the same time, as we can allow some (or even all) entries of $C$, $p$ or $q^-$ to be $-\infty$.

The starting times of the tasks may be subject to more constraints: in the simplest case it may be required that $b_i\leq x_i\leq d_i$ for some $i$ and some $b_i,d_i\in\R$, or that $u_{ij}+x_j\leq x_i$ for some $i$ and $j$ and $u_{ij}\in\R$. However, we may also have {\em disjunctive constraints}, where $b_i\leq v_{ij}+x_j$ , for some $i$ and with $b_i,\,v_{ij}\in\R$, should hold at least for one $j$. Similarly, we may have that 
$\max_k (u_{ik}+x_{k})\leq v_{ij}+x_j$, for some $i$ with $u_{ik}\in\Rmax$ and $v_{ij}\in\R$, should hold for at least one $j$. This motivates considering the above objective function with 
constraint in the form of two-sided tropical affine inequality
$$
U\otimes x\oplus b\leq V\otimes x\oplus d,
$$
which can capture all the above mentioned constraints.

The rest of the paper will be organized as follows. In Section~\ref{s:prel} we recall the basic knowledge and facts about tropical linear algebra and mean-payoff games that are needed for this paper. In Section~\ref{s:MPG} we formulate  Problem~\ref{prob:mainpseudolin} in terms of the associated parametric mean-payoff game and present the certificates of optimality and unboundedness in terms of this game. In Sections~\ref{s:bisection} and~\ref{s:Newt} we develop the bisection and Newton schemes for solving Problem~\ref{prob:mainpseudolin}. Two versions of the Newton scheme are given: one for the case of real valued data and the other for the case of integer data. In Section~\ref{s:appl} we give an example and describe the numerical experiments, which we conducted upon implementing the Newton and bisection schemes in MATLAB. Finally, in Section~\ref{s:pseudoquad} we explain how most of our results, including the Newton and bisection schemes,  extend to pseudoquadratic programming (Problem~\ref{prob:main}). However, efficient implementation of these schemes for this problem will be developed in another publication.      

\section{Preliminaries}
\label{s:prel}

\subsection{Tropical linear algebra}

Some of the main concepts of tropical linear algebra come from combinatorial optimization~\cite{MainBook}.

 With a matrix $A\in(\R\cup\{-\infty\})^{n\times n}$ we can associate a weighted digraph $D_A=(N,E,w)$. It has the set of nodes $N=\{1,\ldots,n\}$, set of arcs 
 $E\subseteq N\times N$ such that $(i,j)\in E$ if and only if $a_{ij}\neq -\infty$, and weight function 
 $w: E\rightarrow R$ defined by $w(i,j)=a_{ij}$.
 Vice versa, if we are given a weighted digraph $D$ with $n$ nodes, we can associate with it a matrix $A_D\in(\R\cup \{-\infty\})^{n\times n}$, along the same lines.

One important player in tropical linear algebra is the maximum cycle mean, $\rho(A)$. Given a matrix $A \in (\R\cup -\infty)^{n\times n}$, we define
\begin{equation*}
    \rho(A) = \underset{\sigma}{\max}\frac{w(A,\sigma)}{l(\sigma)}.
\end{equation*}
\noindent Here, $\sigma$ denotes an elementary cycle. Also if we have 
$\sigma=(i_1\ldots i_ki_1)$ then $w(A,\sigma)=a_{i_1i_2}\otimes a_{i_2i_3}\otimes\ldots\otimes a_{i_ki_1}$ 
is the weight of $\sigma=(i_1\ldots i_ki_1)$ and $l(\sigma)=k$ is the length of the cycle $\sigma$.
Note that 
$\rho(A)$ is well defined for any matrix $A$, and $\rho(A) = -\infty$ if and only if $D_A$ is acyclic. 

The importance of $\rho(A)$ is due to the following. 
On the one hand, in the usual linear algebra, the behaviour of some iterative algorithms crucially depends on 
the existence and properties of $(I-A)^{-1}$. In the tropical linear algebra, this is replaced with 
the Kleene star defined by the formal series
$$
A^*=I\oplus A\oplus A^2\oplus \ldots.
$$
This series converges and is equal to $I\oplus A\oplus\ldots \oplus A^{n-1}$ if and only if $\rho(A)\leq 0$,
that is, if and only if there is no cycle in $D_A$ with a positive weight.

On the other hand, $\rho(A)$ is crucial for the tropical eigenvector-eigenvalue problem, or spectral problem in tropical linear algebra, which is the problem of finding tropical eigenvalue $\lambda$ and
tropical eigenvector $x \neq -\infty$ such that
 $ A \otimes x = \lambda \otimes x.$
The connection between $\rho(A)$ and the tropical spectral problem is that
the greatest tropical eigenvalue of any square matrix over $\Rmax$ is equal to 
$\rho(A)$, see~\cite{baccelli, MainBook}.




\subsection{Dual Operators and Conjugates}

In tropical matrix algebra, matrix inverses exist only for a very special case of diagonal and monomial matrices. However, we can overcome some of the difficulties this poses by defining  conjugates.

 Dual operations are defined using the min plus algebra,
which is the set $\Rmin=\R \cup \{+ \infty\}$ equipped with operations $(\oplus', \otimes')$ defined by 
     $a\oplus' b= \min(a,b)$ and
     $a\otimes' b = a + b$ for all $a,b\in\Rmin$.

Recalling the definition of scalar conjugate 
$a\to a^-$, the conjugate of a matrix $A$ with entries in $\R\cup \{-\infty\}$, denoted $A^{\sharp}$, can be then defined by
\begin{equation*}
    (A^{\sharp})_{ij} = a_{ji}^{-}\quad\forall i,j.
\end{equation*}
Note that as $A$ has entries in $\Rmax$ and can be used to perform max-plus multiplication, $A^{\sharp}$ has entries in $\R\cup\{+\infty\}$ and can be used to perform min-plus multiplication.

Below we will use the following property of 
scalar conjugates: for $a, x\in\Rmax$ 
and $b\in\R\cup\{+\infty\}$, we have
$a \otimes x \leq b$ if and only if $x \leq a^- \otimes' b$. This property can be easily extended to matrices:

\begin{proposition} [e.g., \cite{MainBook} Theorem 1.6.25]
\label{p:equiv}
Let $A \in \Rmax^{m \times n}, x\in \Rmax^n$ and $b \in (\R\cup\{+\infty\})^m$. Then
\begin{equation*}
A \otimes x \leq b\quad \text{if and only if}\quad x \leq A^{\sharp} \otimes' b.
\end{equation*}
\end{proposition}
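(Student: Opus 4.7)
The plan is to reduce the matrix statement to the scalar version already noted just above the proposition, by componentwise unpacking both sides.

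First I would rewrite the inequality $A\otimes x\leq b$ as the conjunction of the scalar inequalities $(A\otimes x)_i = \bigoplus_j a_{ij}\otimes x_j \leq b_i$ for every $i$. Since the tropical sum $\oplus$ is just the maximum, requiring this maximum to be $\leq b_i$ is equivalent to requiring each term $a_{ij}\otimes x_j \leq b_i$ individually, for all $i$ and all $j$.

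Next I would apply the scalar conjugation property stated in the preceding paragraph: for $a_{ij}\in\Rmax$, $x_j\in\Rmax$ and $b_i\in\R\cup\{+\infty\}$, the inequality $a_{ij}\otimes x_j\leq b_i$ is equivalent to $x_j\leq a_{ij}^-\otimes' b_i$. Doing this for all $i$ while keeping $j$ fixed gives a conjunction of upper bounds on the single quantity $x_j$, which is equivalent to $x_j$ being bounded above by their minimum. Thus
\begin{equation*}
x_j \leq \bigoplus\nolimits'_i a_{ij}^- \otimes' b_i
     = \bigoplus\nolimits'_i (A^{\sharp})_{ji}\otimes' b_i
     = (A^{\sharp}\otimes' b)_j,
\end{equation*}
where I have used the definition $(A^{\sharp})_{ji}=a_{ij}^-$ and the definition of min-plus matrix-vector multiplication. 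Requiring this for all $j$ is exactly $x\leq A^{\sharp}\otimes' b$, and all equivalences are bidirectional.

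The argument is essentially a bookkeeping exercise, so the only mild obstacle is the handling of the extended-real arithmetic at the boundary cases: specifically, one must check that the scalar equivalence remains valid when $a_{ij}=-\infty$ (forcing $a_{ij}^-=+\infty$, so the constraint on $x_j$ becomes vacuous, matching the fact that $-\infty+x_j=-\infty\leq b_i$ holds automatically) and when $b_i=+\infty$ (both sides are trivially satisfied). Once these edge cases are verified, the chain of equivalences above yields the proposition.
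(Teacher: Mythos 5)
Your argument is correct and is exactly the routine componentwise reduction to the scalar conjugation property that the paper has in mind when it says the scalar statement ``can be easily extended to matrices'' (the paper itself only cites \cite{MainBook} rather than giving a proof). The edge cases you flag ($a_{ij}=-\infty$ giving a vacuous bound via $a_{ij}^-=+\infty$, and $b_i=+\infty$) are handled correctly, noting that $b_i=-\infty$ is excluded by hypothesis.
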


The scalar and matrix conjugations discussed above were introduced by Cuninghame-Green~\cite{MiniMax} and as residuations of max-plus scalar and matrix multiplication by Baccelli et al.~\cite{baccelli}. In particular, we use the notation $\sharp$ following Baccelli et al.~\cite{baccelli} to emphasize the duality between the operator $A\otimes$ and its residuation $A^{\sharp}\otimes'$, but we prefer to use a more intuitive notation $a^-$ and $x^-$ for scalars and vectors.

\subsection{Two-sided systems and min-max functions}


The following is an obvious corollary of Proposition~\ref{p:equiv}:
\begin{equation*}
A \otimes x \leq B \otimes x\quad\text{if and only if}\quad x \leq A^{\#} \otimes'(B \otimes x).
\end{equation*}

Here we assume that there is a finite entry in each row of $B$ and each column of $A$. With this assumption, the inequality  $x \leq A^{\#} \otimes'(B \otimes x)$ can be written 
as the following system of inequalities:
\begin{equation}
\label{e:twosided-system}
    x_{j} \leq \underset{k\colon a_{kj}\in\R}{\min}(-a_{kj}+\underset{l\colon b_{kl}\in\R}{\max}(b_{kl}+x_{l})) \quad \forall j=1,..,n. 
\end{equation}

The expression on the right-hand side of~\eqref{e:twosided-system} can be written as 
the component of a
{\em min-max function} $f(x)= A^{\sharp} \otimes'(B \otimes x)$: 
\begin{equation*}
    f_j(x) = \underset{k\colon  a_{kj}\in\R}{\min} (-a_{kj}+\underset{l\colon b_{kl}\in\R}{\max}(b_{kl}+x_l)).
\end{equation*}
This function belongs to the class of topical functions investigated by Gaubert and Gunawardena~\cite{GG-04}. For the present paper, we will only need that the {\em cycle-time vector} of $f$: 
\begin{equation}
\label{e:cycletime}
    \chi(f) = \underset{k \rightarrow \infty}{\lim} \frac{f^{k}(0)}{k},
\end{equation}
exists and is well-defined~\cite{GG-98}.
Here, $0$ denotes the vector with all components equal to $0$.

In what follows, 
expression of the form $A^{\sharp}B$ will denote the min-max function $x\mapsto A^{\sharp}\otimes'(B\otimes x)$. In particular, note that, by default, the min-plus multiplication is always to be expected after the $\sharp$ sign. Cycle-time vectors of such functions $\chi(A^{\sharp}B)$ and their individual components $\chi_j(A^{\sharp}B)$ play a crucial role in the mean-payoff game approach to tropical optimization problems.

\subsection{Mean Payoff Games}

Consider the following zero-sum two player sequential game defined by two $m \times n$ matrices $A = (a_{ij})$ and $B = (b_{kl})$ over $\Rmax$. The game is played on the weighted directed graph $(V,E,w)$ where the set of nodes $V=[m]\cup [n]$ is the union of $m$ nodes corresponding to the rows of the system $A\otimes x\leq B\otimes x$ and $n$ nodes corresponding to the variables. The arc set $E$ contains 1) the arcs $(k,l)$ for which $b_{kl}\in\R$ and 
2) the arcs $(j,i)$ for which $a_{ij}\in\R$, and these arcs are weighted by $w(k,l)=b_{kl}$ and 
$w(j,i)=-a_{ij}$ respectively. The nodes in $[m]$ are the nodes at which player Max is active, and
the nodes in $[n]$ are the nodes at which player Min is active. The game starts at a node $j\in [n]$ of Min, and first Min
chooses to move a pawn to some node $i\in [m]$ of Max, via a weighted arc for which $a_{ij}\neq -\infty$. 
In doing so, player Max receives the payment $-a_{ij}$ from Min. Then player Max chooses to move to some $l\in [n]$ for which $b_{il}\neq -\infty$. Player Max then receives a payment of $b_{il}$ from player Min, and then the game proceeds
sequentially in turns. 
Player Max then wishes to maximize the average reward per turn of the infinite trajectory thus created, and
Min wishes to minimize it.

 Ehrenfeucht and Mycielski~\cite{MPGEarly}, who  introduced this game, also showed that 
it is equivalent to a finite game, which ends as soon as the trajectory forms a cycle. In this finite game, we also assume 
that Max and Min play according to some positional strategies: they choose in advance some mapping $\sigma\colon [m]\to [n]$ and $\tau\colon [n]\to [m]$ such that Max always moves the pawn from $i\in [m]$ to $\sigma(i)\in [n]$ and 
Min moves the pawn from $j\in [n]$ to $\tau(j)\in [m]$, so that $b_{i\sigma(i)}\neq -\infty$ and 
$a_{\tau(j)j}\neq -\infty$ for all $i\in [m]$ and $j\in [n]$.  The two positional strategies form a digraph, whose arc set is a subset of the edges in the original mean payoff game, with each node having a unique outgoing arc, either $(i, \sigma(i))$ or $(j, \tau(j))$ (see, for example, \cite{BiNewt}). The mean weight per turn of the first cycle formed by the trajectory is then the reward of player Max, and the game is called winning for Max if this reward is nonnegative. Max then wants to maximize his reward by choosing a better positional strategy, and Min counterplays to minimize her loss by choosing her positional strategy.

Note that the assumption written above guarantees that Max and Min have at least one positional strategy that they can use.  

The mathematical expression for the reward of Max in the finite mean-payoff game starting at node $j$ of Min, given that Max employs positional strategy $\sigma$ and Min employs positional strategy $\tau$, is
\begin{equation*}
    \Phi_{A,B}(j, \tau, \sigma) = 
\frac{1}{k}\left(\sum_{t=1}^{k} \left(b_{r_t s_t}-a_{r_{t+1}s_t}\right)\right),
\end{equation*}
Here $r_1s_1r_2s_2\ldots r_ks_kr_1$ is the unique cycle formed by the trajectory, which starts at a 
node $j$ of Min and develops according to the positional strategies $\sigma$ and $\tau$. It is assumed that $r_{k+1}=r_1$

Ehrenfeucht and Mycielski~\cite{MPGEarly} showed that such mean-payoff game (both the infinite and the finite version of it) has value in pure strategies. The following can be deduced from~\cite{MPGEarly} and Gaubert and Gunawardena ~\cite{GG-98}.

\begin{proposition}[\cite{MPGEarly,GG-98}]
\label{p:MPG-value}
For a mean payoff game, where $A,B \in \mathbb{R}_\text{max}^{m \times n}$ both have a finite entry in each row and column, there exist $\tau^*$, $\sigma^*$ such that for all $j=1,\ldots,n$ \begin{equation}
\label{ValueMPG}
\min_{\tau\in\cT} \Phi_{A,B}(j,\tau,\sigma^*)=\Phi_{A,B}(j,\tau^*,\sigma^*)=\max_{\sigma\in\cS} \Phi_{A,B}(j,\tau^*,\sigma),
\end{equation}
where $\cT$ and $\cS$ denote, respectively, the sets of positional strategies available to Min and to Max. Moreover, $\Phi_{A,B}(j,\tau^*,\sigma^*)=\chi_j(A^{\sharp}B)$ for all $j$ (where $\chi_j(A^{\sharp}B)$ is the $j$th component of the cycle-time vector defined in~\eqref{e:cycletime}).
\end{proposition}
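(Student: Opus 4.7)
The plan is to separate the proposition into two claims: (a) existence of optimal positional strategies $\tau^*,\sigma^*$ realizing the saddle-point equation~\eqref{ValueMPG}, and (b) identification of that saddle value with the component $\chi_j(A^{\sharp}B)$ of the cycle-time vector of the min-max function $A^{\sharp}B$.

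For part (a) I would follow the classical argument of Ehrenfeucht and Mycielski. The hypothesis that each row and column of $A$ and $B$ contains a finite entry guarantees that the game graph has no dead ends, so both players possess at least one positional strategy, and under any pair $(\sigma,\tau)$ the trajectory starting from $j$ is uniquely determined and eventually enters a cycle; after that, the mean reward per turn stabilizes, which is exactly what $\Phi_{A,B}(j,\tau,\sigma)$ records. Since the set of strategy pairs is finite, the saddle-point claim can be obtained by induction on $m+n$: at one node one fixes a locally optimal move for the active player, either ``resolving'' the node or reducing to a smaller game with the same value, and then extracts positional $\sigma^*,\tau^*$ from the reductions. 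Since the paper is already citing~\cite{MPGEarly,GG-98} here, I would quote the positional saddle-point theorem rather than reprove it.

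For part (b) the key observation is that one iteration of $f=A^{\sharp}B$ is exactly one round of the game. Indeed, by definition,
\[
f_j(x) \;=\; \min_{k\colon a_{kj}\in\R}\Bigl(-a_{kj}+\max_{l\colon b_{kl}\in\R}(b_{kl}+x_l)\Bigr),
\]
so the outer $\min$ encodes Min's choice of $k$ from $j\in[n]$, paying $-a_{kj}$ to Max, and the inner $\max$ encodes Max's subsequent choice of $l\in[n]$, collecting $b_{kl}$. A straightforward induction on $k$ then shows that $(f^k(0))_j$ equals the value of the truncated $k$-round game starting at $j$ with zero terminal reward, where a priori the supremum and infimum are taken over arbitrary history-dependent strategies. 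Dividing by $k$ and passing to the limit, the existence of $\chi_j(f)$ is the content of the Gaubert--Gunawardena theorem~\cite{GG-98}, and on the game side the finite-horizon value converges to the positional infinite-horizon value from part (a) up to an $O(1/k)$ correction coming from the transient before the trajectory enters its cycle. Matching the two limits yields $\chi_j(A^{\sharp}B)=\Phi_{A,B}(j,\tau^*,\sigma^*)$.

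The main obstacle is the saddle-point claim in part (a), which is the genuinely hard classical result and is most cleanly imported by citation. The nontrivial work on our side is the bookkeeping in part (b): one has to justify carefully that the finite-horizon value of the game, taken over all (not necessarily positional) strategies, coincides with the positional saddle value up to a uniformly bounded error, so that the $1/k$ normalization kills the discrepancy and the limit coincides with $\chi_j$ componentwise.
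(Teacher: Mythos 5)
Your proposal is correct and matches the paper's treatment: the paper gives no proof of this proposition, importing the positional saddle-point theorem from Ehrenfeucht--Mycielski and the cycle-time identification from Gaubert--Gunawardena exactly as you do, and your sketch of the glue (one application of $f=A^{\sharp}B$ is one round of the game, so $(f^k(0))_j$ is the $k$-round value, and the $O(1)$ transient before the trajectory cycles vanishes under the $1/k$ normalization) is the standard and correct way to deduce the final identity from those two references. No gaps to report.
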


Finally, the relationship between mean-payoff games and 
the solvability of tropical two-sided systems can be summarized in the following result.

\begin{proposition} [\cite{TropicalMPG} Theorem 3.1]
\label{p:tropicalMPG}
For a mean payoff game, where $A,B \in \mathbb{R}_\text{max}^{m \times n}$ both have a finite entry in each row and column and $\sigma^*$ and $\tau^*$ are equilibrium strategies, we have
$\Phi_{A,B}(j,\sigma^*,\tau^*)(=\chi_j(A^{\sharp}B)) \geq 0$ if and only if there exists $x \in \mathbb{R}^{n}_{\text{max}}$ such that
$A\otimes x \leq B\otimes x$ and
$x_j > -\infty$.
\end{proposition}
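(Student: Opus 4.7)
The plan is to reformulate the statement in terms of the min-max function $f=A^{\sharp}B$ and to exploit the identification $\chi_j(A^{\sharp}B)=\Phi_{A,B}(j,\tau^*,\sigma^*)$ from Proposition~\ref{p:MPG-value}. By Proposition~\ref{p:equiv}, the two-sided inequality $A\otimes x\leq B\otimes x$ is equivalent to the componentwise inequality $x\leq f(x)$, so the task reduces to showing that $\chi_j(f)\geq 0$ if and only if $f$ admits a sub-fixed point $x\in\Rmax^n$ with $x_j>-\infty$.

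For the forward direction, assume such $x$ exists. Since $f$ is monotone and additively homogeneous (a topical function in the sense of Gaubert--Gunawardena~\cite{GG-04}), an easy induction from $x\leq f(x)$ yields $x\leq f^k(x)$ for every $k$. Letting $\alpha=\max_i x_i$, which is finite since $x_j>-\infty$, we have $x_l\leq \alpha$ for all $l$, so by monotonicity and homogeneity $f^k(x)_l\leq f^k(e)_l+\alpha$ componentwise, where $e$ denotes the all-zero vector. Combining these two inequalities at coordinate $j$ gives $f^k(e)_j\geq x_j-\alpha$, and dividing by $k$ and passing to the limit yields $\chi_j(f)\geq 0$.

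For the reverse direction, assume $\chi_j(f)\geq 0$ and invoke Proposition~\ref{p:MPG-value} to fix an equilibrium positional strategy $\sigma^*$ of Max. Once $\sigma^*$ is frozen, the game degenerates to a one-player minimum-cycle-mean problem for Min on the induced digraph, and by the equilibrium property every cycle that Min can form starting from $j$ has mean weight $\geq\chi_j(f)\geq 0$. On such a digraph one can construct a potential (bias) vector $x$ by setting $x_l$ to the optimal value of Min's best response from $l$ against $\sigma^*$; equivalently, $x$ may be read off as a finite column of the Kleene star of a suitably reweighted matrix, whose convergence is guaranteed because its maximum cycle mean is nonpositive. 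The inequalities $A\otimes x\leq B\otimes x$ then translate, via Proposition~\ref{p:equiv}, back to $x\leq f(x)$, which is precisely the Bellman-type optimality condition that this bias vector satisfies by construction.

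The main obstacle will be the reverse direction: verifying that the potential vector $x$ extracted from the subgame induced by $\sigma^*$ satisfies $(A\otimes x)_i\leq(B\otimes x)_i$ for \emph{every} row $i$, not only along trajectories consistent with $\sigma^*$. This is exactly where the best-response nature of $\sigma^*$ against any deviation of Min must be invoked: any alternative move $i$ available to Max from a Min-node $j$ cannot improve upon $\sigma^*(j)$, and this is precisely the inequality needed to bound the remaining rows of $A\otimes x$ against the corresponding rows of $B\otimes x$. Making this argument rigorous hinges on the interplay between topicality, the Kleene star for matrices with nonpositive maximum cycle mean, and the value characterization in Proposition~\ref{p:MPG-value}, and it is essentially the content of~\cite[Theorem~3.1]{TropicalMPG}.
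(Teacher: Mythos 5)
First, note that the paper does not prove Proposition~\ref{p:tropicalMPG} at all: it is imported verbatim as Theorem~3.1 of Akian, Gaubert and Guterman~\cite{TropicalMPG}, so there is no in-paper argument to compare yours against. Judged on its own merits, your sketch follows the standard route of that reference. The forward direction is correct and essentially complete: from $x\leq f(x)$ with $f=A^{\sharp}B$ you get $x\leq f^k(x)\leq f^k(0)+\alpha$ with $\alpha=\max_i x_i$ by monotonicity and additive homogeneity, hence $f^k(0)_j\geq x_j-\alpha$, and dividing by $k$ gives $\chi_j(f)\geq 0$.

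The reverse direction, however, is only a sketch, and the place where you announce the ``main obstacle'' is not where the difficulty actually lies. Since $B^{\sigma^*}\leq B$ entrywise, the map $f^{\sigma^*}=A^{\sharp}B^{\sigma^*}$ satisfies $f^{\sigma^*}\leq f$ pointwise, so \emph{any} sub-fixed point of $f^{\sigma^*}$ is automatically a sub-fixed point of $f$; equivalently, $A\otimes x\leq B^{\sigma^*}\otimes x$ already implies $(A\otimes x)_i\leq (B\otimes x)_i$ for every row $i$, with no appeal to best responses against deviations (and note that the rows $i$ are nodes of Max, not moves of Max from Min-nodes as your last paragraph suggests). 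The genuine work is elsewhere: (i) showing that the equilibrium property of $\sigma^*$, via Proposition~\ref{p:MPG-value}, forces every cycle accessible from $j$ in the min-plus matrix $M$ representing the linear map $f^{\sigma^*}$ to have nonnegative weight, and (ii) exhibiting from this a vector $x$ with $x\leq M\otimes' x$ and $x_j>-\infty$, e.g.\ a suitable column of the min-plus Kleene star of $M$ restricted to the nodes accessible from $j$, with $-\infty$ on the remaining coordinates. You gesture at both steps but carry out neither, and your final sentence defers back to~\cite{TropicalMPG}. So the proposal is a reasonable outline consistent with the cited proof, but it is not yet a self-contained argument.
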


Let us now consider the following example of a mean payoff game.

\begin{example}
 Let the matrices $A, B$, given by:
\begin{equation*}
    A =
    \begin{bmatrix}
    3 & -\infty\\
    7 & -\infty \\
    -\infty & 0\\
    \end{bmatrix} 
    \text{  and  }
    B =
   \begin{bmatrix}
    2 & -\infty \\
    -\infty & 1 \\
    -3 & 4 \\
    \end{bmatrix}.
\end{equation*}
The two-sided system $A\otimes x\leq B\otimes x$ is given by the following system of inequalities:
\begin{equation}
\label{e:exsystem}
\begin{split}
    3 + x_1 &\leq 2 + x_1\\
    7 + x_1 &\leq 1 + x_2\\
    x_2 &\leq \text{max}(-3 + x_1, 4 + x_2).\\
\end{split}
\end{equation}
We immediately see from the first inequality that there is no solution with $x_1>-\infty$. 
In fact, the solution set of this system is 
$\{(-\infty,t)\colon t\in\mathbb{R}_{\max}\}$.

The corresponding game is given on Figure~\ref{f:MPGexample} (left).
Here squares denote the nodes at which Max makes a move and circles denote the nodes at which Min makes a move.

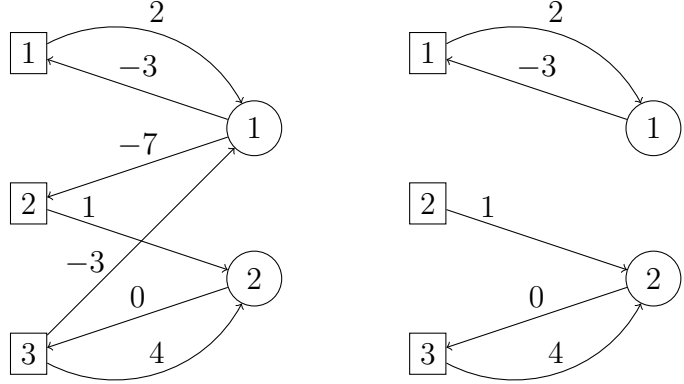
\begin{figure}
     \begin{center}
     \begin{tabular}{ccccc}
     \begin{tikzpicture}
    \node[shape=rectangle,draw=black] (A1) at (0,0) {1};
    \node[shape=rectangle,draw=black] (A2) at (0,-2) {2};
    \node[shape=rectangle,draw=black] (A3) at (0,-4) {3};
    \node[shape=circle,draw=black] (B1) at (3,-1) {1};
    \node[shape=circle,draw=black] (B2) at (3, -3) {2};

    \path [->](A1) edge[bend left = 45] node[above] {$2$} (B1);
    \path [->](B1) edge node[above] {$-3$} (A1);
    \path [->](B1) edge node[above] {$-7$} (A2);
    \path [->](A2) edge node[above] {} (B2); 
    \path [->](B2) edge node[above] {$0$} (A3); 
    \path [->](A3) edge[bend right = 45] node[above] {$4$} (B2);
    \path [->](A3) edge node[left, above] {} (B1);
 \node[] (x) at (0.8,-2.05) {$1$};
 \node[] (y) at (0.75,-2.8) {$-3$};
\end{tikzpicture}
&&&&
\begin{tikzpicture}
    \node[shape=rectangle,draw=black] (A1) at (0,0) {1};
    \node[shape=rectangle,draw=black] (A2) at (0,-2) {2};
    \node[shape=rectangle,draw=black] (A3) at (0,-4) {3};
    \node[shape=circle,draw=black] (B1) at (3,-1) {1};
    \node[shape=circle,draw=black] (B2) at (3, -3) {2};

    \path [->](A1) edge[bend left = 45] node[above] {$2$} (B1);
    \path [->](B1) edge node[above] {$-3$} (A1);
    \path [->](A2) edge node[above] {} (B2); 
    \path [->](B2) edge node[above] {$0$} (A3); 
    \path [->](A3) edge[bend right = 45] node[above] {$4$} (B2);
 \node[] (x) at (0.8,-2.05) {$1$};
\end{tikzpicture}
\end{tabular}
\caption{The mean payoff game corresponding 
to system \eqref{e:exsystem} and its restriction}
\label{f:MPGexample}
\end{center}
\end{figure}

Let players Max and Min choose positional strategies $\sigma\colon 1\to 1,\; 2\to 2,\; 3\to 2$
and $\tau\colon 1\to 1,\; 2\to 3$, respectively. The game is then played on a subgraph of the graph of the 
mean-payoff game. This subgraph is shown on Figure~\ref{f:MPGexample} (right). 

Clearly, 
the rewards of Max for the trajectories starting at the two nodes of Min are:
\begin{equation*}
\Phi_{A,B}(1,\sigma,\tau)=-1,\qquad \Phi_{A,B}(2,\sigma,\tau)=4.
\end{equation*}
Also, it can be checked that $\sigma^*=\sigma$ and $\tau^*=\tau$ is an equilibrium pair of 
strategies of Max and Min, in the sense of the saddle point property~\eqref{ValueMPG}.
The above rewards are the values of the games starting at nodes 1 and 2 of Min. Here, Max is winning if the game starts at node 2 of Min but losing 
if the game starts at node 1 of Min. This correlates with the fact that system~\eqref{e:exsystem}
has a solution with $x_2>-\infty$, but not with $x_1>-\infty$, as predicted by Proposition~\ref{p:tropicalMPG}.
\end{example}

In what follows, we will often abbreviate mean payoff game(s) to MPG.

\section{MPG representation of pseudolinear optimization}

\label{s:MPG}

\subsection{Pseudolinear optimization over alcoved polyhedra}


Consider the following problem.

\begin{problem}[Pseudolinear optimization over alcoved polyhedra]
Given $U\in\Rmax^{n\times n}$, $p, g\in\Rmax^n$, $q,h\in(\R\cup\{+\infty\})^n$ such that $\rho(U)\leq 0$ and $U\otimes g\leq h$, find  
\begin{equation*}
    \begin{aligned}
& \underset{x\in\R^n}{\text{min}}
& & x^- \otimes p \oplus q^- \otimes x\\
& \text{subject to}
& & g \leq x \leq h\\
&&& U\otimes x \leq x.
    \end{aligned}
\end{equation*}
and describe all $x\in\R^n$ for which the minimum is attained.
\label{prob:constrpseudolin}
\end{problem}

Note that whenever we have $q_i=+\infty$ it means that $q_i^-=-\infty$, so that $x_i$ does not appear in the objective function (but $x_i^-$ still may appear when $p_i\neq -\infty$). When $h_i=+\infty,$ it means that $x_i$ is not bounded from above by any constant, and if $g_i=-\infty$ then $x_i$ is not bounded from below.

Problem~\ref{prob:constrpseudolin} was considered and solved by Krivulin~\cite{KrivPseudolin-14}, for the case of real $q$ and $h$. The solution set to the system of constraints in this problem is an alcoved polyhedron. The geometry of such polyhedra is combining tropical and ordinary convexity: see, e.g., Joswig and Kulas~\cite{JK-10}, or  De La Puente and Claveria~\cite{PuenteCL-18} for a more recent reference.

It follows from the results of~\cite{KrivPseudolin-14} that an alcoved polyhedron described by
\begin{equation*}
P=\{x\colon g\leq x\leq h,\ U\otimes x\leq x\}    
\end{equation*}
is non-empty if and only if the conditions $\rho(U)\leq 0$ and $U^*\otimes g\leq h$ hold.
Note that these are precisely the conditions that are assumed in Problem~\ref{prob:constrpseudolin}. 

The proof of the proposition below is due to Krivulin~\cite{KrivPseudolin-14}, although the case where some entries of $h$ and $q$ are equal to $+\infty$ was not considered in that work.
See also Appendix of the present paper for a new alternative proof, which makes use of the connection to mean-payoff games.

\begin{proposition} [\cite{KrivPseudolin-14} Theorem 6]
\label{p:constrpseudolin}
The optimal value of Problem~\ref{prob:constrpseudolin} is 
\begin{equation*}
    \theta = (q^-\otimes U^*\otimes p)^{\otimes \frac{1}{2}} \oplus h^-\otimes U^*\otimes p \oplus q^-\otimes U^*\otimes g, 
\end{equation*}
If $\theta$ is finite then the solution set of this problem is
\begin{equation}
\label{e:solset-pseudolin}
    \{ U^*\otimes v : g \oplus \theta^-\otimes p \leq v \leq (U^*)^{\#} \otimes' (\theta\otimes q \oplus' h),\ v\in\R^n\}.
\end{equation}
\end{proposition}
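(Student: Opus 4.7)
The plan is to parametrize the alcoved polyhedron $P = \{x\colon g \le x \le h,\ U \otimes x \le x\}$ via its Kleene star image and then obtain matching lower and upper bounds on the objective. Since $\rho(U) \le 0$, the constraint $U \otimes x \le x$ is equivalent to $x = U^{*} \otimes x$, so every feasible $x$ has the form $x = U^{*} \otimes v$ (one can always take $v = x$). On $P$ I would also record the two ``propagation'' identities
\begin{equation*}
x^{-} \otimes p \;=\; x^{-} \otimes U^{*} \otimes p, \qquad q^{-} \otimes x \;=\; q^{-} \otimes U^{*} \otimes x,
\end{equation*}
each following from the path bound $x_{j} - x_{i} \ge U^{*}_{ji}$ implied by $x \ge U^{*} \otimes x$. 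These identities let me absorb $U^{*}$ into $p$ and $q^{-}$ without changing the objective on $P$.

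To lower-bound the objective by $\theta$ I would combine three elementary estimates. The tropical inequality $a \oplus b \ge (a \otimes b)^{\otimes 1/2}$ applied to $a = x^{-} \otimes p$ and $b = q^{-} \otimes x$, combined with the propagation identities, produces the term $(q^{-} \otimes U^{*} \otimes p)^{\otimes 1/2}$. The constraint $x \le h$ turns $x^{-} \otimes U^{*} \otimes p$ into the bound $h^{-} \otimes U^{*} \otimes p$, and dually $x \ge g$ turns $q^{-} \otimes U^{*} \otimes x$ into $q^{-} \otimes U^{*} \otimes g$. Their tropical sum is precisely $\theta$.

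For the upper bound and the description of the solution set, I would show that $x$ is feasible with $x^{-} \otimes p \le \theta$ and $q^{-} \otimes x \le \theta$ if and only if $x = U^{*} \otimes v$ with $v$ in the stated interval. Applying Proposition~\ref{p:equiv} with $A = U^{*}$ to the upper inequalities $x \le h$ and $x \le \theta \otimes q$ yields the upper bound on $v$ as the componentwise minimum $(U^{*})^{\#} \otimes' (\theta \otimes q \oplus' h)$, while $U^{*} \ge I$ makes $v \ge g \oplus \theta^{-} \otimes p$ sufficient for the lower inequalities $x \ge g$ and $x \ge \theta^{-} \otimes p$ (and necessary once one re-parametrizes $v = x$). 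The decisive computation is that this interval is non-empty whenever $\theta$ is finite; this reduces to four scalar inequalities, typified by $\theta^{-} \otimes p \le (U^{*})^{\#} \otimes' (\theta \otimes q)$, and each of them is equivalent, after transposition via Proposition~\ref{p:equiv}, to one of the three terms of $\theta$ being at most $\theta$ (using $U^{*}_{ii} = 0$ and the assumed $U^{*} \otimes g \le h$).

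I expect the main obstacle, which is not present in~\cite{KrivPseudolin-14}, to be the careful bookkeeping when $h_{i} = +\infty$ or $q_{i} = +\infty$: Proposition~\ref{p:equiv} and the propagation identities still apply in this extended setting, but one must verify that none of the final componentwise inequalities involves an undefined expression of the form $(+\infty) - (+\infty)$. In every such case both sides reduce to $-\infty$ and the inequality holds trivially, which closes the argument.
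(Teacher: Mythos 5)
Your argument is correct, but it is not the route the paper takes. The paper's appendix deliberately gives a \emph{new} proof that recasts Problem~\ref{prob:constrpseudolin} as the parametric mean-payoff game of Figure~\ref{f:constrainedMPG}, observes that player Max has no choices in that game, and reads off the optimal value as the least $\lambda$ making every cycle weight nonnegative; after compressing chains of $(U^*)^{\sharp}$-arcs, the three $\lambda$-dependent cycle families give exactly the three terms of $\theta$, and the fourth family is disposed of by the hypothesis $U^*\otimes g\leq h$. Your proof of the optimal value is instead the direct algebraic one, essentially Krivulin's original argument: the propagation identities $x^-\otimes p = x^-\otimes U^*\otimes p$ and $q^-\otimes x = q^-\otimes U^*\otimes x$ on the feasible set, the scalar bound $a\oplus b\geq (a\otimes b)^{\otimes 1/2}$, and the explicit verification that the interval for $v$ is non-empty whenever $\theta$ is finite, which simultaneously proves attainment (your four scalar inequalities are exactly the statement that each term of $\theta$ is at most $\theta$, plus the standing assumption $U^*\otimes g\leq h$). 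Both routes are valid: the MPG proof buys a uniform template that the paper reuses for the optimality and unboundedness certificates and for the pseudoquadratic extension, while your computation is shorter, self-contained, and extends cleanly to $h_i=+\infty$ or $q_i=+\infty$ --- precisely the case the paper notes was absent from~\cite{KrivPseudolin-14}. Your treatment of the solution set coincides with the paper's: re-parametrize by $x=U^*\otimes v$ and transpose the upper constraints via Proposition~\ref{p:equiv}.
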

Here and below, $t^{\otimes 1/2}$ is the same as $t/2$ for any $t\in\mathbb{R}_{\max}$ in the usual notation. (Note that one can define $t^{\otimes\alpha}:=\alpha\times t$ for arbitrary $t\in\mathbb{R}_{\max}$ and $\alpha\geq 0$, but we will not need it in this paper.)

\subsection{Pseudolinear optimization as parametric MPG}
\label{ss:recast}

The purpose of this section is to recast Problem~\ref{prob:mainpseudolin} as a parametric mean-payoff game.
To begin, we can rewrite that problem in the following way, introducing new variable $\lambda$:

\begin{equation}
\label{MP33}
    \begin{aligned}
& \underset{x\in\R^n,\lambda\in\R}{\min}
& & \lambda \\
& \text{subject to}
& &  x^-\otimes p\oplus q^-\otimes  x \leq \lambda, \\
&&& U\otimes x \oplus b \leq V\otimes x \oplus d.
    \end{aligned}
\end{equation}

The first inequality is equivalent to $x^-\otimes p\leq\lambda$ and $q^-\otimes x\leq\lambda$. The first of these inequalities is equivalent to $x_i^-\otimes p_i\leq\lambda$ for all $i$, which is the same as $p_i\leq x_i\otimes \lambda$ for all $i$. Hence we obtain that~\eqref{MP33} is equivalent to:
\begin{equation}
\label{MP3}
    \begin{aligned}
& \underset{x\in\R^n,\lambda\in\R}{\min}
& & \lambda \\
& \text{subject to}
& &  p \leq \lambda\otimes  x,\quad q^-\otimes  x \leq \lambda, \\
&&& U\otimes x \oplus b \leq V\otimes x \oplus d.
    \end{aligned}
\end{equation}

Introducing $ z = (y\ t)^T$ where $y\in\Rmax^n$ and $t\in\Rmax$, we see that a finite solution to the system of constraints in \eqref{MP3} exists if and only if the following parametric two-sided system

\begin{equation*}
A\otimes z\leq B(\lambda)\otimes z,
\end{equation*}
where
\begin{equation}
\label{e:ABlambda-pseudolin}
A =
    \begin{pmatrix}
    U & b\\
    -\infty & p\\
    q^- & -\infty \\
    \end{pmatrix} 
    \text{  and  }
    B(\lambda) =
   \begin{pmatrix}
    V & d\\
    \lambda\otimes I & -\infty \\
    -\infty & \lambda\\
    \end{pmatrix},
\end{equation}
has a finite solution ($z\in\R^{n+1}$).

Thus, we reformulate Problem~\ref{prob:mainpseudolin} as 
\begin{equation*}
\min\{\lambda\in\R \colon A\otimes z\leq B(\lambda)\otimes z\ \text{is solvable with $z\in\R^{n+1}$}\},    
    \end{equation*}
where $A$ and $B(\lambda)$ are given by~\eqref{e:ABlambda-pseudolin}.

 We now reformulate this problem in terms of MPG, using Proposition~\ref{p:tropicalMPG}, by introducing the function $\Phi(\lambda)$ defined as the minimal value of the MPG associated with the system $A\otimes z\leq B(\lambda)\otimes z$: 
\begin{equation*}
\min\{\lambda\in\R\colon\Phi(\lambda)\geq 0\}, 
\quad\text{where}\quad \Phi(\lambda) = \underset{i}{\text{ min }} \chi_i (A^{\#} B(\lambda)).
\end{equation*}

We also define functions $\Phi_{\tau}(\lambda)$ and $\Phi^{\sigma}(\lambda)$ corresponding to MPG where the strategies of Min and Max are restricted to $\tau$ and $\sigma$ respectively:

\begin{equation*}
    \Phi_{\tau}{(\lambda)} = \underset{i}{\text{ min }} \chi_i (A_{\tau}^{\#} B(\lambda)),\qquad
     \Phi^{\sigma}{(\lambda)} = \underset{i}{\text{ min }} \chi_i (A^{\#} B^{\sigma}(\lambda))
\end{equation*}
where
\begin{equation*}
(A_{\tau})_{ij}=
\begin{cases}
a_{ij} & \text{if $i=\tau(j)$}\\
-\infty &\text{otherwise}.
\end{cases},\qquad 
(B^{\sigma})_{ij}=
\begin{cases}
b_{ij} & \text{if $j=\sigma(i)$}\\
-\infty & \text{otherwise}.
\end{cases}
\end{equation*}

Proposition~\ref{p:MPG-value} then implies 
the following result:

\begin{proposition}
\label{p:Newt}
Let $\Sigma$ be the set of all strategies of player Min and $T$ is the set of all strategies of player Max. Then
    \begin{equation}
    \label{PhiEquals}
    \underset{\tau \in T}{\min} \; \Phi_{\tau}(\lambda) \; = \; \Phi(\lambda) \; = \; \underset{\sigma \in \Sigma}{\max}\; \Phi^{\sigma}(\lambda).
\end{equation}
\end{proposition}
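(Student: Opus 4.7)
The plan is to derive~\eqref{PhiEquals} from Proposition~\ref{p:MPG-value} by interpreting the restricted cycle-time vectors as values of suitably restricted mean-payoff games and then chaining the componentwise saddle point inequalities.

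First I would identify $\chi_j(A_\tau^{\sharp} B(\lambda))$ with the value at node $j$ of the MPG in which Min's moves are forced to follow the positional strategy $\tau$. Since the only finite entries of $A_\tau$ are $(A_\tau)_{\tau(j),j}=a_{\tau(j),j}$, the game graph associated with $(A_\tau,B(\lambda))$ differs from the one associated with $(A,B(\lambda))$ only in that from each Min node $j$ there is a unique outgoing arc, to $\tau(j)$. Applying Proposition~\ref{p:MPG-value} to this reduced game (whose relevant portion still has the needed finiteness properties, since the active Max nodes are $\{\tau(j):j\in[n]\}$ and the Max side is intact) yields
$$
\chi_j(A_\tau^{\sharp} B(\lambda)) \;=\; \max_{\sigma\in\Sigma}\Phi_{A,B(\lambda)}(j,\tau,\sigma).
$$
A completely symmetric argument, restricting Max instead to $\sigma$, gives $\chi_j(A^{\sharp}B^{\sigma}(\lambda))=\min_{\tau\in T}\Phi_{A,B(\lambda)}(j,\tau,\sigma)$.

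Next, I would let $(\tau^*,\sigma^*)$ be an equilibrium pair for the MPG with matrices $A$ and $B(\lambda)$ provided by Proposition~\ref{p:MPG-value}. For any $\tau\in T$ and any $j$, the saddle point inequalities give
$$
\chi_j(A_\tau^{\sharp} B(\lambda))=\max_\sigma \Phi_{A,B(\lambda)}(j,\tau,\sigma)\geq \Phi_{A,B(\lambda)}(j,\tau,\sigma^*)\geq \Phi_{A,B(\lambda)}(j,\tau^*,\sigma^*)=\chi_j(A^{\sharp}B(\lambda)),
$$
so that $\Phi_\tau(\lambda)\geq \Phi(\lambda)$ for every $\tau$. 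Taking $\tau=\tau^*$, the chain collapses to an equality by~\eqref{ValueMPG}, hence $\Phi_{\tau^*}(\lambda)=\Phi(\lambda)$. Taking the minimum over $\tau$ then yields the first equality of~\eqref{PhiEquals}. The second equality is handled identically, swapping the roles of Max and Min, with $\sigma=\sigma^*$ achieving the maximum.

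The main obstacle I anticipate is the first step: making the game-theoretic interpretation of $\chi_j(A_\tau^{\sharp}B(\lambda))$ and $\chi_j(A^{\sharp}B^{\sigma}(\lambda))$ precise. In particular, strictly speaking the hypothesis of \emph{finite entry in each row and column} of Proposition~\ref{p:MPG-value} may fail for $A_\tau$ and $B^{\sigma}$ as matrices, so one has to verify that the restriction still fits the MPG framework after deleting rows or columns that become identically $-\infty$. This should be routine, since the associated topical function $x\mapsto A_\tau^{\sharp}\otimes'(B(\lambda)\otimes x)$ is unchanged by the deletion and the cycle-time theory of~\cite{GG-98} applies, but it is the one place where the argument needs care before the saddle point manipulation can be invoked.
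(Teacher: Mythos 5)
Your proposal is correct and follows exactly the route the paper takes: the paper derives Proposition~\ref{p:Newt} directly from Proposition~\ref{p:MPG-value} (stating only that the latter ``implies'' the former), and your argument supplies the saddle-point chain and the identification of $\chi_j(A_\tau^{\sharp}B(\lambda))$ and $\chi_j(A^{\sharp}B^{\sigma}(\lambda))$ with values of the restricted games that this implication rests on. Your caveat about the row/column finiteness hypothesis for $A_\tau$ and $B^{\sigma}$ is a legitimate point of care that the paper glosses over, and your resolution of it is sound.
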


The following elementary properties of 
$\Phi(\lambda),$ $\Phi^{\sigma}(\lambda)$ and
$\Phi_{\tau}(\lambda)$ are the same as in~\cite{BiNewt}[Theorem 8].

\begin{proposition}
\label{p:nondecrease}
For $A$ and $B(\lambda)$ given ~\eqref{e:ABlambda-pseudolin}, functions
$\Phi(\lambda)$, $\Phi^{\sigma}(\lambda)$ and 
$\Phi_{\tau}(\lambda)$ are non-decreasing, piecewise-linear and continuous. 
\end{proposition}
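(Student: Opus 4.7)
The plan is to express each of the three functions as a minimum--maximum of finitely many non-decreasing affine functions of $\lambda$, from which all three asserted properties follow at once.

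First I would fix arbitrary $j$, $\tau$, $\sigma$ and check that $\Phi_{A,B(\lambda)}(j,\tau,\sigma)$ is affine in $\lambda$ with non-negative slope. The point is that once $(j,\tau,\sigma)$ is fixed, the resulting trajectory and its unique cycle $r_1s_1\ldots r_ks_kr_1$ are determined independently of $\lambda$, so
\begin{equation*}
\Phi_{A,B(\lambda)}(j,\tau,\sigma) \;=\; \frac{1}{k}\sum_{t=1}^k\bigl(b_{r_ts_t}(\lambda)-a_{r_{t+1}s_t}\bigr).
\end{equation*}
Reading off~\eqref{e:ABlambda-pseudolin}, every entry $b_{kl}(\lambda)$ of $B(\lambda)$ is either a constant (coming from $V$ or $d$) or equals $\lambda$ itself (coming from the block $\lambda\otimes I$ or from the bottom-right corner). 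Hence the above expression takes the form $c+\alpha\lambda$ with $\alpha\in[0,1]$ equal to the proportion of $\lambda$-dependent arcs in the cycle, and in particular its slope is non-negative.

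Next I would represent each of $\Phi$, $\Phi_\tau$, $\Phi^\sigma$ as a finite min/max combination of such affine maps. For the full game, Proposition~\ref{p:MPG-value} together with~\eqref{PhiEquals} gives
\begin{equation*}
\Phi(\lambda)\;=\;\min_j\chi_j(A^{\#}B(\lambda))\;=\;\min_j\min_\tau\max_\sigma \Phi_{A,B(\lambda)}(j,\tau,\sigma).
\end{equation*}
For $\Phi_\tau$, the min--max function $A_\tau^{\#}B(\lambda)$ collapses to a pure max (Min is forced to play $\tau$), so $\chi_j(A_\tau^{\#}B(\lambda))=\max_\sigma\Phi_{A,B(\lambda)}(j,\tau,\sigma)$, yielding $\Phi_\tau(\lambda)=\min_j\max_\sigma\Phi_{A,B(\lambda)}(j,\tau,\sigma)$; symmetrically one gets $\Phi^\sigma(\lambda)=\min_j\min_\tau\Phi_{A,B(\lambda)}(j,\tau,\sigma)$. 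Since the strategy sets are finite, each function is a finite min/max of affine maps.

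To finish, one invokes the standard observations that a finite minimum or maximum of continuous affine functions is piecewise-linear and continuous, and that such operations preserve monotonicity. Combined with the non-negative slopes from the first step, this yields the three asserted properties simultaneously for $\Phi$, $\Phi_\tau$ and $\Phi^\sigma$. The main technical subtlety I foresee is the identification of $\chi_j(A_\tau^{\#}B(\lambda))$ and $\chi_j(A^{\#}B^\sigma(\lambda))$ with the corresponding one-sided optima: one has to verify directly from the definitions of $A_\tau$ and $B^\sigma$ that restricting a player to a single positional strategy legitimately collapses the corresponding min or max in the saddle-point identity of Proposition~\ref{p:MPG-value}, after which the argument becomes entirely formal.
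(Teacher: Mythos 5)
Your argument is correct, but it takes a genuinely different route from the one the paper relies on. The paper (following \cite{BiNewt}, Theorem~8) works at the level of the one-step min-max maps $f(\lambda,x)=A^{\#}B(\lambda)x$, $f_\tau$, $f^\sigma$: these are non-decreasing and continuous in $\lambda$ for each fixed $x$ because the finite entries of $B(\lambda)$ are, and the properties are then transported to $\chi(f)=\lim_k f^k(0)/k$ and hence to the $\Phi$'s. You instead invoke the positional-strategy characterization (Proposition~\ref{p:MPG-value} and the explicit cycle formula for $\Phi_{A,B}(j,\tau,\sigma)$) to write each function as a finite $\min$/$\max$ of affine maps $c+\alpha\lambda$ with $\alpha\ge 0$, the key observation being that for fixed $(j,\tau,\sigma)$ the cycle does not depend on $\lambda$ and each turn contributes at most one occurrence of $\lambda$. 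Your route has the advantage that piecewise-linearity falls out immediately, whereas it is not automatic from a pointwise limit of iterates and really requires the same finite-strategy structure in the background; the price you pay is exactly the subtlety you flag yourself, namely justifying $\chi_j(A_\tau^{\#}B(\lambda))=\max_\sigma\Phi_{A,B(\lambda)}(j,\tau,\sigma)$ and its counterpart for $B^\sigma$, since $A_\tau$ may have empty rows and $B^\sigma$ empty columns, so Proposition~\ref{p:MPG-value} does not apply verbatim and one must check directly that the cycle-time vector of the one-player game equals the corresponding one-sided optimum. That verification is standard (it is the max-cycle-mean theorem for the reduced graph), so I regard your proof as complete in substance; it is also arguably more self-contained than the paper's appeal to \cite{BiNewt}.
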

\if{
\begin{proof}
Let us introduce the following functions
\begin{equation}
\label{e:flambda}
    f(\lambda,x) = A^{\#}B(\lambda)x, \quad 
    f^{\sigma}(\lambda,x) = A^{\#}B^{\sigma}(\lambda)x, \quad f_{\tau}(\lambda,x) = A_{\tau}^{\#}B(\lambda)x, \quad 
\end{equation}
and observe that they are non-decreasing and continuous functions of $\lambda$ for any fixed $X\in\Rmax^n$, since the finite entries of $B(\lambda)$ are non-decreasing and continuous functions of $\lambda$ (and by the properties of max-plus arithmetics). The claim the follows from the properties of limits, since  $\chi(f)=\lim_{k\to\infty} f^k(0)/k$ by the definition of the cycle-time vector and since the functions $\Phi$ are defined as minimal components of such vectors.
\end{proof}
}\fi

\subsection{MPG diagram of the problem}

Analysing $A$ and $B(\lambda)$ of~\eqref{e:ABlambda-pseudolin} we can build an MPG diagram corresponding to this two-sided system, as shown on Figure~\ref{f:MPG-pseudolin}. On this diagram we can see three groups of square nodes of Max: $[m]$, $[n]$ and $[1]$, with groups of arcs coming in and out of them, corresponding to $U\otimes x\oplus b\leq V\otimes x\oplus d$, 
$p\leq\lambda\otimes x$ and $q^-\otimes x\leq \lambda$, respectively (where $m,n$ and $1$ indicate the numbers of nodes in each group). Min has just two groups of circle nodes: $[n]$ and $[1]$, corresponding to the variables and to the free-standing column, respectively.
An arc between two nodes of any two groups of nodes exists if and only if the corresponding entry of the matrix or the vector, by which the group connection is marked, is finite. 


\begin{figure}[h!]
\begin{center}
\begin{tikzpicture}[scale=1]
    \node[shape=rectangle,draw=black] (m) at (0,3) {[m]};
    \node[shape=circle,draw=black] (n) at (5,6) {[n]};
		\node[shape=rectangle,draw=black] (m+k+1) at (5,3) {[1]};
    \node[shape=circle,draw=black] (n+1) at (5,0) {[1]};
    \node[shape=rectangle,draw=black] (k) at (10,3) {[n]};
    
    \path [->](m) edge[bend left = 10] node[above] {$V$} (n);
    \path [->](n) edge[bend left = 10] node[below] {$U^{\#}$} (m);
    \path [->](m) edge[bend left = 10] node[above] {$d$} (n+1);
    \path [->](n+1) edge[bend left = 10] node[below] {$b^-$} (m);
    \path [->](k) edge node[above] {$\lambda I$} (n);
    \path [->](n+1) edge node[above] {$p^-$} (k);
    \path [->](m+k+1) edge node[right] {$\lambda$} (n+1);
    \path [->](n) edge node[right] {$q$} (m+k+1);
\end{tikzpicture}
\caption{MPG corresponding to tropical pseudolinear programming\label{f:MPG-pseudolin}}
\end{center}
\end{figure}
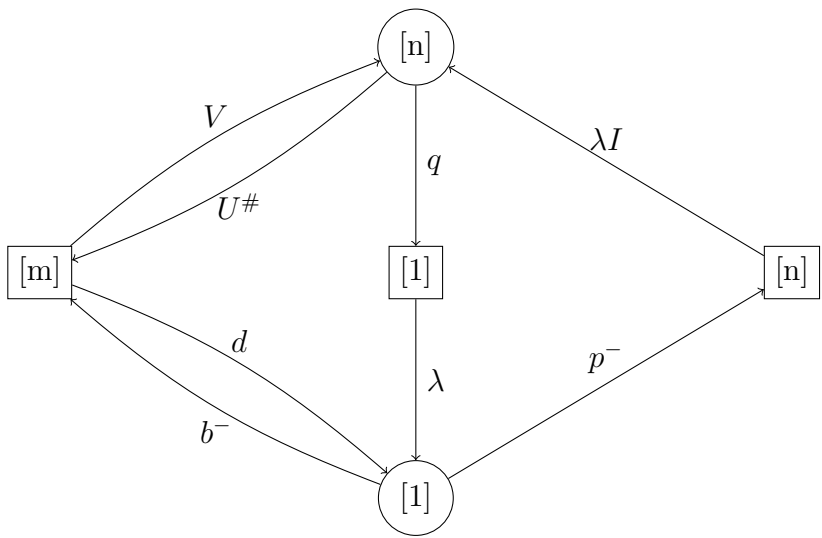

Using this MPG diagram we can establish the following optimality and unboundedness certificates for Problem~\ref{prob:mainpseudolin}, similar to Theorems 12 and 13 in~\cite{BiNewt}.
Both certificates refer to some groups of nodes of Max shown on Figure~\ref{f:MPG-pseudolin}. Their proofs, being similar to those in~\cite{BiNewt}, are deferred to Appendix.

\begin{proposition}[Optimality certificate]
\label{p:optcert}
$\lambda^*$ is optimal if and only if $\Phi(\lambda^*)\geq 0$ and there exists 
$\tau$ such that in the mean-payoff game defined by $A_{\tau}$ and $B(\lambda^*)$ all cycles accessible from some node of Min have non-positive
weights, and all cycles of zero weight accessible from that node of Min contain a node of Max that does not belong to the left group $[m]$ of its nodes.
\end{proposition}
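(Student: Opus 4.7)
The plan is to combine three ingredients from the preceding material: Proposition~\ref{p:tropicalMPG} (which identifies $\Phi(\lambda)\geq 0$ with feasibility of $A\otimes z\leq B(\lambda)\otimes z$ in $\R^{n+1}$), Proposition~\ref{p:Newt} (which writes $\Phi(\lambda)=\min_\tau \Phi_\tau(\lambda)$), and Proposition~\ref{p:nondecrease} (continuity, monotonicity and piecewise-linearity of all these functions). The first key preliminary observation is that in the MPG diagram of Figure~\ref{f:MPG-pseudolin} the arcs whose weights depend on $\lambda$ are exactly those emanating from Max nodes in the groups $[n]$ (through $\lambda I$) and $[1]$ (through $\lambda$), while all arcs incident to Max nodes in the left group $[m]$ have $\lambda$-independent weights. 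Consequently, for any cycle $C$ in the graph, its total weight is of the form $w_0(C)+k(C)\cdot\lambda$, where $k(C)\geq 0$ counts the Max nodes of $C$ lying outside $[m]$; in particular the weight is strictly increasing in $\lambda$ iff $C$ contains at least one such Max node.

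For the $(\Leftarrow)$ direction assume both conditions hold. Feasibility at $\lambda^*$ follows from $\Phi(\lambda^*)\geq 0$ via Proposition~\ref{p:tropicalMPG}. Fix $\tau$ as in the statement and let $j^*$ be the witnessing Min node. In the one-player game $A_\tau^{\sharp}B(\lambda^*)$, the value $\chi_{j^*}$ equals the maximum mean weight of cycles reachable from $j^*$, which is $\leq 0$ by hypothesis. Hence $\Phi_\tau(\lambda^*)\leq 0$ and, together with $\Phi(\lambda^*)\geq 0$ and $\Phi\leq \Phi_\tau$, we get $\Phi(\lambda^*)=0$. For any $\lambda'<\lambda^*$, every zero-weight cycle from $j^*$ contains a Max node outside $[m]$, hence has $k(C)\geq 1$ and its weight at $\lambda'$ is strictly negative; all other reachable cycles had weight $<0$ already at $\lambda^*$ and remain $\leq$ their $\lambda^*$-values. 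Thus $\chi_{j^*}(A_\tau^{\sharp}B(\lambda'))<0$, giving $\Phi(\lambda')\leq \Phi_\tau(\lambda')<0$, so $\lambda'$ is infeasible and $\lambda^*$ is optimal.

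For the $(\Rightarrow)$ direction, assume $\lambda^*$ is optimal. Feasibility gives $\Phi(\lambda^*)\geq 0$, and continuity plus monotonicity of $\Phi$ force $\Phi(\lambda^*)=0$, with $\Phi(\lambda)<0$ for all $\lambda<\lambda^*$. The heart of the argument is a selection step: the finite family $\{\Phi_\tau\}$ is made of continuous functions, so for $\lambda\uparrow\lambda^*$ the minimum cannot be attained by any $\tau$ with $\Phi_\tau(\lambda^*)>0$ (such a $\tau$ would keep $\Phi_\tau$ strictly positive in a neighbourhood). Therefore, along a sequence $\lambda_k\uparrow\lambda^*$ with $\Phi(\lambda_k)<0$, there exists a single $\tau^*$ (by finiteness of the strategy set) with $\Phi_{\tau^*}(\lambda^*)=0$ and $\Phi_{\tau^*}(\lambda_k)<0$. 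Picking a witnessing Min node $j^*$ with $\chi_{j^*}(A_{\tau^*}^{\sharp}B(\lambda^*))=0$, we conclude in particular that all cycles from $j^*$ have weight $\leq 0$ at $\lambda^*$.

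Finally, the strict decrease $\chi_{j^*}(A_{\tau^*}^{\sharp}B(\lambda))<0$ for $\lambda<\lambda^*$ close enough has to be matched against the representation of $\chi_{j^*}$ as a maximum of cycle weights. If some zero-weight cycle from $j^*$ were entirely contained in $[m]$, its weight would be independent of $\lambda$ and equal to $0$ for all $\lambda$, giving $\chi_{j^*}(\lambda)\geq 0$ in a neighbourhood of $\lambda^*$, a contradiction. Hence every zero-weight cycle from $j^*$ in $A_{\tau^*}^{\sharp}B(\lambda^*)$ contains a Max node outside $[m]$, as required. The main obstacle is precisely this selection step extracting a single $\tau^*$ whose zero-weight cycles are uniformly $\lambda$-sensitive; everything else reduces to the linear $\lambda$-dependence of cycle weights and the standard identification of $\chi_j$ with the maximum mean cycle weight in a one-player game.
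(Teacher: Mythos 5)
Your overall route is the same as the paper's: reduce optimality to $\Phi(\lambda^*)=0$ together with $\Phi(\lambda)<0$ for $\lambda<\lambda^*$, pass to the finite family $\{\Phi_\tau\}$ via the left-hand side of~\eqref{PhiEquals}, then to a single component $\chi_j(A_\tau^{\sharp}B(\lambda))$, and finally read everything off the linear $\lambda$-dependence $w_0(C)+k(C)\lambda$ of cycle weights, where $k(C)$ counts the Max nodes of $C$ outside the $[m]$ group. The $(\Leftarrow)$ direction is correct as written.

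There is, however, a gap in the $(\Rightarrow)$ direction, at the choice of the witnessing Min node. You carefully extract $\tau^*$ by a pigeonhole argument along $\lambda_k\uparrow\lambda^*$ so that $\Phi_{\tau^*}(\lambda^*)=0$ \emph{and} $\Phi_{\tau^*}(\lambda)<0$ for $\lambda<\lambda^*$, but you then pick $j^*$ merely as an index with $\chi_{j^*}(A_{\tau^*}^{\sharp}B(\lambda^*))=0$ and immediately assert ``the strict decrease $\chi_{j^*}(A_{\tau^*}^{\sharp}B(\lambda))<0$ for $\lambda<\lambda^*$ close enough.'' That does not follow: $\Phi_{\tau^*}=\min_i\chi_i$, and for $\lambda<\lambda^*$ the minimum may be attained at a different index. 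Concretely, if $\chi_1(\lambda)\equiv 0$ (all of its zero-weight cycles having their Max nodes in $[m]$) and $\chi_2(\lambda)=\lambda-\lambda^*$, then both indices satisfy $\chi_i(\lambda^*)=0$, your selection could return $j^*=1$, and the conclusion of the proposition fails for that node while it holds for $j^*=2$. The fix is exactly the selection argument you already used for $\tau^*$, applied once more to the finite family $\{\chi_i\}$: along $\lambda_k\uparrow\lambda^*$ some index $j^*$ attains $\min_i\chi_i(\lambda_k)$ infinitely often, whence $\chi_{j^*}(\lambda_k)<0$ and, by continuity and monotonicity, $\chi_{j^*}(\lambda^*)=0$ with $\chi_{j^*}(\lambda)<0$ for all $\lambda<\lambda^*$. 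This is what the paper does in one stroke by choosing $j$ and $\epsilon$ so that $\Phi_\tau(\lambda)=\chi_j(A_\tau^{\sharp}B(\lambda))=a\lambda+b$ with $a>0$ on the whole interval $[\lambda^*-\epsilon,\lambda^*]$. With that repair, the rest of your argument (a zero-weight cycle whose Max nodes all lie in $[m]$ would force $\chi_{j^*}\geq 0$ identically) goes through.
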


\begin{proposition}[Unboundedness certificate]
\label{p:unbcert}
The problem is unbounded if and only if, for some $\sigma$, all cycles of the graph defined by $A$ and $B^{\sigma}(0)$ contain nodes of Max that are only from the $[m]$ group (on the left) and have a nonnegative weight.
\end{proposition}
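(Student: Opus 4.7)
The plan is to mimic the proof of \cite{BiNewt}[Theorem 13], adapting it to the MPG structure of Figure~\ref{f:MPG-pseudolin}. First I would recast unboundedness as a limit condition on $\Phi$. Since $\Phi$ is non-decreasing by Proposition~\ref{p:nondecrease}, the feasible set of~\eqref{MP33} is an interval of the form $[\lambda^*,\infty)$, $(-\infty,\infty)$, or $\emptyset$, so the problem is unbounded (with no lower bound on $\lambda$) if and only if $\Phi(\lambda)\geq 0$ for every $\lambda\in\mathbb{R}$. Next I would use Proposition~\ref{p:Newt} to write $\Phi=\max_{\sigma}\Phi^{\sigma}$. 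Because the set of positional strategies $\sigma$ is finite and each $\Phi^{\sigma}$ is non-decreasing, the limit $\lim_{\lambda\to-\infty}\Phi^{\sigma}(\lambda)$ exists and commutes with the finite maximum, so $\Phi(\lambda)\geq 0$ for all $\lambda$ is equivalent to the existence of some $\sigma$ with $\Phi^{\sigma}(\lambda)\geq 0$ for all $\lambda$.

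Next, I would analyze $\Phi^{\sigma}(\lambda)$ as a minimum mean cycle weight in the restricted graph. By Proposition~\ref{p:MPG-value} applied with Max's strategy fixed to $\sigma$ (so Min alone plays to minimize), the component $\chi_{j}(A^{\sharp}B^{\sigma}(\lambda))$ equals the minimum mean weight among cycles of the restricted graph that are reachable from the Min node $j$. Taking $\min_{j}$ and observing that any cycle is trivially reachable from any of its own Min nodes, I obtain that $\Phi^{\sigma}(\lambda)$ is simply the minimum mean weight over \emph{all} cycles in the graph defined by $A$ and $B^{\sigma}(\lambda)$.

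I would then pinpoint the $\lambda$-dependent arcs using the block structure~\eqref{e:ABlambda-pseudolin}. The only entries of $B(\lambda)$ that depend on $\lambda$ are the diagonal block $\lambda I$, whose arcs leave the middle $[n]$ group of Max, and the scalar entry $\lambda$, whose arc leaves the right $[1]$ group of Max. Hence a cycle contains a $\lambda$-weighted arc if and only if it visits a Max node outside the left group $[m]$. For such a cycle of length $L$ traversing $k\geq 1$ of these arcs, the mean weight has the form $(W_{0}+k\lambda)/L$ with $W_{0}$ independent of $\lambda$, which becomes arbitrarily negative as $\lambda\to-\infty$; therefore $\Phi^{\sigma}(\lambda)\geq 0$ for all $\lambda$ forces every cycle to avoid the $\lambda$-arcs. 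Under that restriction the cycle weights no longer depend on $\lambda$, and the remaining requirement $\Phi^{\sigma}(\lambda)\geq 0$ reduces to each such cycle (equivalently, each cycle evaluated with $B^{\sigma}(0)$) having nonnegative weight, yielding the stated certificate.

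The main obstacle is the bookkeeping in Step 3, namely justifying that $\chi_{j}(A^{\sharp}B^{\sigma}(\lambda))$ really coincides with the minimum mean weight over cycles reachable from $j$ once Max's strategy is fixed; this uses Proposition~\ref{p:MPG-value} in the degenerate one-player instance and the fact that Min can force entry into her best reachable cycle. Everything else is a routine reading of the diagram in Figure~\ref{f:MPG-pseudolin} together with the monotone behaviour of $\Phi^{\sigma}$ in $\lambda$.
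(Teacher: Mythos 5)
Your proof is correct and follows essentially the same route as the paper's: reduce unboundedness to $\Phi(\lambda)\geq 0$ for all $\lambda\in\R$, use finiteness of the strategy set together with monotonicity to extract a single $\sigma$ with $\Phi^{\sigma}(\lambda)\geq 0$ everywhere, interpret $\Phi^{\sigma}$ as a minimum cycle mean in the one-player game via $\Phi^{\sigma}(\lambda)=\min_{i}\min_{\tau}\Phi_{A,B(\lambda)}(i,\tau,\sigma)$, and observe that a cycle's weight depends on $\lambda$ exactly when it visits a Max node outside the $[m]$ group. The only cosmetic difference is that you swap the quantifiers by passing to the limit $\lambda\to-\infty$ and commuting it with the finite maximum, whereas the paper invokes piecewise linearity to argue that some $\Phi^{\sigma}$ is eventually constant; both are valid and rest on the same monotonicity.
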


We now consider the case when all data in the problems are integer.

\begin{proposition}
\label{p:intcase}
When the finite entries of $p,q,U,b,V,d$ are all integer, the optimal value of Problem~\ref{prob:mainpseudolin}, if finite,  
is an integer multiple of $1/2$.
\end{proposition}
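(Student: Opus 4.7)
My plan is to combine the optimality certificate (Proposition~\ref{p:optcert}) with a structural inspection of the MPG diagram in Figure~\ref{f:MPG-pseudolin}. First I would argue that $\Phi(\lambda^*)=0$. By the certificate there exist a Min strategy $\tau$ and a Min node $j^*$ such that every simple cycle of the restricted game $(A_\tau,B(\lambda^*))$ accessible from $j^*$ has non-positive weight; thus Max's best response from $j^*$ yields a cycle of mean weight $\leq 0$, forcing $\Phi(\lambda^*)\leq \Phi_\tau(\lambda^*)\leq 0$, and together with $\Phi(\lambda^*)\geq 0$ (also from the certificate) this pins $\Phi(\lambda^*)=0$. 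Consequently Max's best response from $j^*$ traces out a simple cycle $C^*$ of weight exactly $0$, and by the second clause of the certificate $C^*$ must visit a Max node outside the left group $[m]$.

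Next I would decompose $w(C^*)$. Reading off Figure~\ref{f:MPG-pseudolin}, the $\lambda$-carrying edges are exactly the diagonal edges from the right Max group $[n]$ to the top Min group $[n]$ (coming from $\lambda I$) and the single edge from the middle Max node to the bottom Min node (coming from the scalar $\lambda$); all remaining finite edge weights are entries, or negations of entries, of $U,V,p,q,b,d$, and hence integers under the hypothesis. Therefore $w(C^*)=c+k\lambda^*$ with $c\in\Z$, where $k$ is the number of $\lambda$-edges traversed by $C^*$. From $w(C^*)=0$ I obtain $\lambda^*=-c/k$, and the statement reduces to showing $k\in\{1,2\}$.

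The structural bound $k\leq 2$, which I expect to be the main obstacle, rests on a bottleneck observation. Every node of the right Max group $[n]$ has, as its sole incoming arc, the corresponding $p^-$-arc from the bottom Min node; since $C^*$ is simple that bottom Min node appears at most once, so $C^*$ visits at most one node of the right Max group and uses at most one $\lambda I$-edge. The middle Max node is itself a single vertex, contributing at most one scalar $\lambda$-edge. This yields $k\leq 2$. For the opposite bound, every Max node outside $[m]$ has only $\lambda$-edges among its outgoing arcs, so the certificate clause forcing $C^*$ to visit such a node gives $k\geq 1$. Altogether $\lambda^*=-c/k$ with $c\in\Z$ and $k\in\{1,2\}$, whence $\lambda^*\in\tfrac{1}{2}\Z$, as required.
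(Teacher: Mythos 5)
Your proof is correct, and it reaches the same arithmetic core as the paper's argument: the optimal value is the root of an affine function $k\lambda+s$ with $s\in\Z$ and $k\in\{1,2\}$, where $k$ counts the $\lambda$-edges collected by a single elementary cycle of the game graph. You arrive at the relevant cycle by a different mechanism, however. The paper observes that on a small interval $[\lambda^*-\epsilon,\lambda^*]$ the function $\Phi(\mu)$ coincides with the mean weight $\Phi_{A,B(\mu)}(j,\sigma,\tau)$ of one fixed cycle (using the piecewise-linear structure from Proposition~\ref{p:nondecrease}), so that $\lambda^*=\min\{\lambda\colon k\lambda+s\geq 0\}$; it then asserts $k\in\{1,2\}$ simply by ``inspecting Figure~\ref{f:MPG-pseudolin}'', with $k\neq 0$ implicitly covered by the finiteness hypothesis. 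You instead invoke the optimality certificate (Proposition~\ref{p:optcert}) to produce a zero-weight elementary cycle $C^*$ at $\lambda^*$, and the certificate's second clause hands you $k\geq 1$ explicitly, since every Max node outside the left group $[m]$ has only $\lambda$-weighted outgoing arcs. Your bottleneck argument --- each node of the right Max group $[n]$ has the bottom Min node as its unique predecessor, so an elementary cycle traverses at most one $\lambda I$-arc, plus at most one traversal of the single middle Max node --- is precisely the justification the paper leaves implicit, and is worth spelling out. The trade-off is that your route presupposes the optimality certificate (proved independently in the Appendix, so there is no circularity), whereas the paper's route needs only the continuity and piecewise-linearity of $\Phi$; in exchange, yours makes both bounds on $k$ fully explicit.
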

\begin{proof}
 For the MPG on Figure~\ref{f:MPG-pseudolin}, for each $\lambda$ there is $\epsilon>0$ in which 
 $$
 \Phi(\mu)=\Phi_{A,B(\mu)}(j,\sigma,\tau)
 $$
 for some $j,$ $\tau$ and $\sigma$ and 
 all $\mu\in [\lambda-\epsilon,\lambda]$.
 This implies that 
 $$
 \lambda^*=\min\{\lambda\colon \Phi_{A,B(\lambda)}(j,\sigma,\tau)\geq 0\}=
 \min\{\lambda\colon k\lambda+s\geq 0\},
 $$
 for some $j,$ $\sigma$, $\tau$, $k$ and $s$. 
 Here we recall that $\Phi_{A,B(\lambda)}(j,\sigma,\tau)$ is the weight of a cycle (divided by the number of turns). Inspecting Figure~\ref{f:MPG-pseudolin} we can see that a cycle can collect no more than two  repetitions of $\lambda$, hence in the above expression for $\lambda^*$ we can have $k$ equal to $1$ or $2$, which means that the denominator of the optimal value is bounded by $2$.
\end{proof}

\section{Bisection method}
\label{s:bisection}

Since $\Phi(\lambda)$ is a non-decreasing function, we can use the bisection method to find $\min\{\lambda\colon\Phi(\lambda)\geq 0\}$. Recall that $\Phi(\lambda)\geq 0$
is equivalent to existence of $x\in\R^n$
that solves
\begin{equation}
     \label{e:solvability}
     \begin{split}
    & x^-\otimes p\oplus q^-\otimes x\leq\lambda,\\
    & U\otimes x\oplus b\leq V\otimes x\oplus d.
     \end{split}
 \end{equation}

In general, this method can be only approximate, but in the case of integer input (i.e., when all finite entries of $U$, $V,$ $b$, $d$, $p$, $q$ are integer) it can be made precise, since by Proposition~\ref{p:intcase} in this case the  optimal $\lambda$ is an integer multiple of $1/2$. In the description of the bisection method given below, rounding up ($\lceil\cdot\rceil$) means finding the least $\lambda$ greater than or equal to the given number and an integer multiple of $1/2$. Similarly, rounding down ($\lfloor\cdot\rfloor$) means finding the biggest $\lambda$ less than or equal to the given number and an integer multiple of $1/2$.   

Before we give a description of the bisection method, we need to have the upper and the lower bounds, with which we can start. 

{\bf Upper bound:} Using an MPG solver such as described in~\cite{DGShort-06} or~\cite{AltMethod}, find $x\in\R^n$ such that $U\otimes x\oplus b\leq V\otimes x\oplus d$. Then compute 
\begin{equation}
\label{e:ubound}
\lambda_1^{(+)}= \left\lfloor x^-\otimes p\oplus q^-\otimes x\right\rfloor
\end{equation}
for this $x$. Then we know that $\Phi(\lambda^{(+)}_1)\geq 0$.

{\bf Lower bound:}
We define 
$\lambda_1^{(-)}$ as 
the optimal value of the unconstrained problem $$\min\limits_{x\in\R^n}  x^-\otimes p\oplus q^-\otimes x.$$ Using the result of Krivulin~\cite{KrivPseudolin-12}[Theorem 4] (a slight extension to the case $q\in(\R\cup\{+\infty\})^n$), we have 
\begin{equation}
\label{e:lbound}
\lambda_1^{(-)}=(q^-\otimes p)^{\otimes \frac{1}{2}}
\end{equation}
As this unconstrained problem is a relaxation of the problem in question, we either have that $\Phi(\lambda_1^{(-)})\geq 0$ and then $\lambda_1^{(-)}$ is the optimal value of the constrained problem, or we have $\Phi(\lambda_1^{(-)})<0$ and then we continue with the bisection method. If $q^-\otimes p$ is finite, which we assume, then the problem is bounded from below. Note that $q^-\otimes p$ is finite if and only if there exists at least one $i$ such that both $q_i$ and $p_i$ are finite. We now describe the bisection algorithm.

 \begin{algorithm}[h]
 \renewcommand{\algorithmicrequire}{\textbf{Input:}}
 \renewcommand{\algorithmicensure}{\textbf{Output:}}
 \caption{Integer Bisection \label{a:bisection}}
 \begin{algorithmic}[1] 
 \REQUIRE  $U$, $V$, $b$, $d$, $p$ with integer or $-\infty$ entries, $q$ with integer or $+\infty$ entries.
 \STATE Compute $\lambda_1^{(-)}$ by~\eqref{e:lbound}. 
 \IF{$\Phi(\lambda_1^{(-)})\geq 0$} 
\STATE Find a finite solution $x$ to~\eqref{e:solvability} with $\lambda=\lambda_1^{(-)}$ and return.
 \ELSE 
 \STATE Compute $\lambda_1^{(+)}$ by~\eqref{e:ubound} and proceed with $k=1$.
 \ENDIF
 \WHILE{$\lambda_k^{(-)}<\lambda_k^{(+)}$}
 \STATE $\lambda_k=(\lambda_k^{(-)}+\lambda_k^{(+)})/2$.
 \IF{$\Phi(\lambda_k)<0$}
 \STATE $\lambda_{k+1}^{(+)} = \lambda_{k}^{(+)}$ and $\lambda_{k+1}^{(-)} = \lceil\lambda_{k}\rceil$.
 \ELSE 
 \STATE ${\lambda_{k+1}^{(+)}} = \lfloor\lambda_{k}\rfloor$ and $\lambda_{k+1}^{(-)} = \lambda_{k}^{(-)}$.
 \ENDIF
 \STATE $k=k+1$.
 \ENDWHILE
 \STATE Find
 a finite solution $x$ to~\eqref{e:solvability} with $\lambda=\lambda_k^{(+)}=\lambda_k^{(-)}$ and return.
\ENSURE $(\lambda,x)$.
 \end{algorithmic}
 \end{algorithm}

It is clear that in this algorithm, for each $\lambda_k^{(+)}$ we have $\Phi(\lambda_k^{(+)})\geq 0$, and for each 
$\lambda_k^{(-)}$ we have $\Phi(\lambda_k^{(-)})\leq 0$ and $\Phi(\lambda')<0$ whenever $\lambda'<\lambda_k^{(-)}$. This implies that when $\lambda=\lambda_k^{(+)}=\lambda_k^{(-)}$, we 
indeed have $\Phi(\lambda)=0$ and $\Phi(\lambda')<0$ for any 
$\lambda'<\lambda$, so this $\lambda$ is the optimal value of the problem. 

\begin{example} 
\label{ex:badex}
{\rm This example demonstrates that $(q^-\otimes p)^{\otimes 1/2}$ does not always work as a good lower bound. Consider a pseudolinear optimization problem (Problem~\ref{prob:mainpseudolin}) with
\begin{equation*}
\begin{split}
U&=\left(
      \begin{array}{cc}
        0 & -\infty \\
       -\infty & 0\\
      \end{array}
    \right),\quad
b=\left(
    \begin{array}{c}
      -\infty \\
      -\infty \\
    \end{array}
  \right),\quad
V=\left(
    \begin{array}{cc}
      -\infty & 0 \\
      0 & -\infty \\
    \end{array}
  \right),\quad 
d=\left(
    \begin{array}{c}
      -\infty \\
      -\infty \\
    \end{array}
  \right),\\
p&=\left(
    \begin{array}{c}
      0 \\
      -\infty \\
    \end{array}
  \right),\qquad
q=\left(
    \begin{array}{c}
      +\infty \\
      0 \\
    \end{array}
  \right).
\end{split}
\end{equation*}

In this example, $p_{i}$ and $q_{i}$ are not finite at the same time for any $i$, so $(q^{-}\otimes p)^{\otimes 1/2}=-\infty$, but the problem is bounded and its optimal value is $0$. Indeed, we are seeking minimum over $\max(x_1^{-},x_2)$ on the line $x_1=x_2$, which is equal to $0$.} 
\end{example}

\section{Newton Iterations}
\label{s:Newt}

\subsection{Formulation}

Let us begin this section by introducing the concepts of left-optimal strategy.
By the right-hand side of~\eqref{PhiEquals}, $\Phi(\cdot)$ is a pointwise maximum of a finite number of functions $\Phi^{\sigma}(\cdot)$, for $\sigma\in\Sigma$.  Therefore, for each $\lambda$ there exists $\Sigma'\subseteq\Sigma$ such that $\Phi^{\sigma}(\lambda)=\Phi(\lambda)$ for each $\sigma\in\Sigma'$. Furthermore, each of these $\Phi^{\sigma}(\cdot)$ is a piecewise-linear and continuous function, so there exists a small enough $\epsilon>0$ such that each $\Phi^{\sigma}(\cdot)$ with $\sigma\in\Sigma'$ is linear on 
$[\lambda-\epsilon,\lambda]$ and is bigger than any $\Phi^{\sigma}(\cdot)$ with $\sigma\notin\Sigma'$. This implies that there exists $\sigma^*$ such that $\Phi^{\sigma^*}(\mu)=\Phi(\mu)$ for all $\mu\in[\lambda-\epsilon,\lambda]$. Such $\sigma^*$ is called a {\em left-optimal strategy} at $\lambda$. Left-optimal strategies play the role of derivatives in Algorithm~\ref{a:Newt} stated below. 
Iterations of this algorithm will refer to efficient solution of the following problem
\begin{equation}
\label{e:subproblem}
    \min \{ \lambda : \Phi^{\sigma}(\lambda) \geq 0\},
\end{equation}
which will be explained in Subsection~\ref{sss:partial}.

\begin{algorithm}[h]
\renewcommand{\algorithmicrequire}{\textbf{Input:}}
 \renewcommand{\algorithmicensure}{\textbf{Output:}}
 \caption{Newton iterations, left-optimal strategies\label{a:Newt}}
 \begin{algorithmic}[1] 
\REQUIRE $U,$ $V,$ $b,$ $d$, $p$ with entries in 
$\Rmax$ and $q$ with entries in $\R\cup\{+\infty\}$.
\STATE Set $\lambda_0=+\infty$, compute $\lambda_1$ by~\eqref{e:ubound} and proceed with $k=1$.
\WHILE{$\lambda_k<\lambda_{k-1}$}
\STATE Find a left-optimal strategy $\sigma_k$ of Player Max at $\lambda_k$. 
\STATE Solve~\eqref{e:subproblem} with $\sigma=\sigma_k$
and let $\lambda_{k+1}$ be the optimal value of~\eqref{e:subproblem}.
\STATE $k=k+1$.
\ENDWHILE
\STATE Find a solution $x\in\R^n$ to system~\eqref{e:solvability} with $\lambda=\lambda_k$. 
\ENSURE $(\lambda,x)$.
\end{algorithmic}
\end{algorithm}

 The following result and its proof are similar to the corresponding statement from Gaubert et al.~\cite{BiNewt}, therefore the proof is omitted.

\begin{proposition}
Newton algorithm  finds a solution of a tropical pseudoquadratic optimization problem in a finite number of steps, limited by the number of strategies of player Max in the associated MPG.
\end{proposition}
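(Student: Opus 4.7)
The plan is to follow the blueprint of Theorem~15 in Gaubert et al.~\cite{BiNewt}, by establishing three properties of the sequence $(\lambda_k, \sigma_k)$ generated by Algorithm~\ref{a:Newt}: (i) feasibility, $\Phi(\lambda_k) \geq 0$ for each $k$; (ii) monotonicity, $\lambda_{k+1} \leq \lambda_k$; and (iii) no strategy $\sigma_k$ recurs, except possibly a final consecutive repetition that triggers immediate termination. Together with correctness of the output these yield both finiteness and the bound by the number $|\Sigma|$ of positional strategies of Max.

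Property~(i) is proved by induction. The base case $\Phi(\lambda_1)\geq 0$ holds because $\lambda_1$ is computed from an explicit feasible $x$ through~\eqref{e:ubound}. For the inductive step, the defining minimization~\eqref{e:subproblem} gives $\Phi^{\sigma_k}(\lambda_{k+1}) \geq 0$, and the inequality $\Phi \geq \Phi^{\sigma_k}$ from Proposition~\ref{p:Newt} yields $\Phi(\lambda_{k+1}) \geq 0$. Property~(ii) follows from left-optimality: we have $\Phi^{\sigma_k}(\lambda_k) = \Phi(\lambda_k) \geq 0$, so by monotonicity of $\Phi^{\sigma_k}$ (Proposition~\ref{p:nondecrease}) the minimizer $\lambda_{k+1}$ is at most $\lambda_k$.

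The main obstacle is~(iii). Suppose $\sigma_k = \sigma_j$ with $j < k$, both having been picked by the loop. By~(ii), $\lambda_k \leq \lambda_{j+1}$. If the inequality is strict, then the minimality of $\lambda_{j+1}$ gives $\Phi^{\sigma_j}(\lambda_k) < 0$, while left-optimality combined with feasibility gives $\Phi^{\sigma_k}(\lambda_k) = \Phi(\lambda_k) \geq 0$; since $\sigma_k = \sigma_j$, this is a contradiction. If $\lambda_k = \lambda_{j+1}$, then~(ii) forces $\lambda_{j+1} = \lambda_{j+2} = \cdots = \lambda_k$, so the while-condition at iteration $j+2$ already fails and iteration $k > j+1$ is never reached. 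Only the case $k = j+1$, i.e., $\sigma_k = \sigma_{k-1}$, remains; a direct calculation shows that in this case the next minimization gives $\lambda_{k+1} = \lambda_k$ and the loop exits at once. Hence the total number of iterations is bounded in terms of $|\Sigma|$.

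Finally, for correctness at termination we invoke left-optimality of the last picked $\sigma_{K-1}$ at $\lambda_{K-1}$: $\Phi$ and $\Phi^{\sigma_{K-1}}$ coincide on some left neighborhood of $\lambda_{K-1}$, and the minimization giving $\lambda_K = \lambda_{K-1}$ ensures $\Phi^{\sigma_{K-1}}(\lambda) < 0$ for $\lambda < \lambda_{K-1}$. Monotonicity of $\Phi$ then rules out any feasible $\lambda$ below $\lambda_{K-1}$, identifying $\lambda_{K-1}$ as the optimal value of Problem~\ref{prob:mainpseudolin}. The feasible $x$ output by the algorithm is recovered by any MPG-based solver applied to system~\eqref{e:solvability} at $\lambda = \lambda_{K-1}$, exactly as in the computation of the initial upper bound.
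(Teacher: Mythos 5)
Your proposal is correct and follows essentially the same route as the paper's own argument: left-optimality of $\sigma_k$ at $\lambda_k$ combined with the minimality defining $\lambda_{k+1}$ forces $\Phi^{\sigma_k}(\lambda)<0$ for all $\lambda<\lambda_{k+1}$, so no strategy can be re-selected at a later (strictly smaller, feasible) iterate, and finiteness of the strategy set bounds the iteration count; optimality at termination then follows from left-optimality of the final strategy together with monotonicity of $\Phi$. Your version merely makes explicit the feasibility and monotonicity lemmas and the edge case of a final consecutive repetition, which the paper's proof treats implicitly with the phrase ``they can never repeat until the end.''
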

\if{
\begin{proof}
Before the stopping condition is met, we find $\lambda_{k+1}=\min\{\lambda\colon\Phi^{\sigma}(\lambda)\geq 0\}$, where $\sigma$ is a left-optimal or left-winning strategy at $\lambda_k$. This implies that $\Phi^{\sigma}(\lambda)<0$ for all $\lambda<\lambda_{k+1}$. Therefore $\sigma$ cannot be 
left-optimal for any $\lambda< \lambda_{k+1}$ where $\Phi(\lambda)\geq 0$, and 
it can be left-optimal at $\lambda_{k+1}$ only if the algorithm stops there. As player Max has a finite number of strategies and they can never repeat until the end, it follows that the algorithm
stops after a finite number of steps.

When the algorithm 
stops at $\lambda^*$, we know that the left-optimal strategy $\sigma$ has a property that 
$\Phi^{\sigma}(\lambda)<0$ for $\lambda<\lambda^*$.
Since $\sigma$ is left-optimal, we have $\Phi(\lambda)=\Phi^{\sigma}(\lambda)<0$ for $\lambda\in [\lambda^*-\epsilon,\lambda^*]$ for some $\epsilon$, and also $\Phi(\lambda)<0$ for 
all $\lambda<\lambda^*$ since $\Phi(\lambda)$ is non-decreasing. We also have $\Phi(\lambda^*)=\Phi^{\sigma}(\lambda)$, showing that $\lambda^*$ is an optimal value.

\end{proof}
}\fi

In the general case (i.e., when the data are arbitrary real numbers) left-optimal  strategies can be found using the algebra of germs, for which we also refer the reader to Gaubert et al.~\cite{BiNewt}.


\subsection{The case of integer data}

In this section we will discuss how to implement Newton's algorithm in the case when all given data (i.e., coefficients of $U$, $V$, $b$, $d$, $p$ and $q$) are integer.

We first explain how to find the left-optimal strategies in the case of integer data. By the arguments similar to those of Proposition~\ref{p:intcase}, $\lambda_k$ appearing in Newton iterations are of the form $l/2$ where $l$ is an integer, while the breakpoints of $\Phi(\lambda)$ can be at rational points with denominators not exceeding $2(n+1)$.
The greatest denominator of a distance between $\lambda_k$ and such a breakpoint is bounded from above by $4(n+1)$. 
Therefore, if we take  $\epsilon=1/4(n+1)$, we can be sure that if $\sigma_k$ is optimal at $\lambda_k-\epsilon$ then it is optimal for any 
$\lambda\in [\lambda_k-\epsilon,\lambda_k]$, thus left-optimal. 

However, in the case of integer data, instead of finding a left-optimal strategy we can find an optimal strategy $\sigma_k$ at $\lambda_k^{[-]}$, defined as the largest multiple of $1/2$ less than $\lambda_k$ if it is not a multiple of $1/2$ (which may happen if $k=1$), and as $\lambda_k-1/2$ if it is. We use the notation $\lambda_k^{[-]}$ here to distinguish it from the notation $\lambda_k^{(-)}$
(and $\lambda_k^{(+)}$) used in the bisection method.
 
 If we have $\Phi^{\sigma_k}(\lambda_k^{[-]})\geq 0$, then the Newton iterations proceed, and if 
 $\Phi^{\sigma_k}(\lambda_k^{[-]})<0$ then $\Phi(\lambda)<0$ for all $\lambda<\lambda_k$ and we must stop. 
 So we have the following modification of Algorithm~\ref{a:Newt}, where optimal strategies can be found using some MPG solvers such as the algorithm by Dhingra and Gaubert~\cite{DGShort-06}.


\begin{algorithm}[h]
\renewcommand{\algorithmicrequire}{\textbf{Input:}}
 \renewcommand{\algorithmicensure}{\textbf{Output:}}
 \caption{Integer Newton iterations\label{a:Newt-int-opt}}
 \begin{algorithmic}[1] 
\REQUIRE $U,$ $V,$ $b,$ $d$, $p$ with integer or $-\infty$ entries, and $q$ with integer or $+\infty$ entries.
\STATE Set $\lambda_0=+\infty$, compute $\lambda_1$ by~\eqref{e:ubound} and proceed with $k=1$.
\WHILE{$\lambda_k<\lambda_{k-1}$}
\STATE Compute $\lambda_k^{[-]}$ and 
find an optimal strategy $\sigma_k$ of player Max at $\lambda_k^{[-]}$.
\STATE Solve~\eqref{e:subproblem} with $\sigma=\sigma_k$ and let $\lambda_{k+1}$ be the optimal value of~\eqref{e:subproblem}.
\STATE $k=k+1$.
\ENDWHILE
\STATE Find a finite solution $x$ to system~\eqref{e:solvability} with $\lambda=\lambda_{k-1}$. 
\ENSURE $(\lambda,x)$.
\end{algorithmic}
\end{algorithm}

Note that in this algorithm the sequence of $\lambda_i$ is strictly decreasing until the last step, at which we may have $\lambda_k=\lambda_{k-1}$, $\lambda_k>\lambda_{k-1}$ or even $\lambda_k=+\infty$ if the problem~\eqref{e:subproblem} is infeasible for $\sigma=\sigma_k$.


\subsection{Finding the least zero of a partial spectral function}
\label{sss:partial}

Here we discuss how to solve~\eqref{e:subproblem}.
The problem can be translated back to two-sided system where it becomes the problem of finding 
\begin{equation}
\label{e:subproblem1}    
\min\{\lambda\in\R\colon A\otimes z\leq B^{\sigma}(\lambda)\otimes z\quad \text{has a solution $z\in\R^{n+1}$}\}, \end{equation}
where $B^{\sigma}$ is the matrix with entries equal to $b^{\sigma}_{ij}=b_{ij}$ when $j=\sigma(i)$ and to $-\infty$ otherwise, and $\sigma$ is the strategy of Max appearing in~\eqref{e:subproblem}. Recalling~\eqref{e:ABlambda-pseudolin}, we can rewrite ~\eqref{e:subproblem1} as
\begin{equation}
\label{e:subproblem2}
\min\limits_{\lambda\in\R,\;x\in\R^n} \lambda\ \text{s.t.}\  p \leq \lambda\otimes x,\quad  q^-\otimes x \leq \lambda,\quad U\otimes  x \oplus b \leq V^{\sigma}\otimes  x \oplus d^{\sigma}.
\end{equation}
where $(V^{\sigma}\; d^{\sigma})=(V\; d)^{\sigma}$, and $(V\; d)^{\sigma}$ is defined as $B^{\sigma}$ above: we leave the entries of $B$ for which $j=\sigma(i)$ untouched and set all the remaining entries to $-\infty$.
We now work with the system of constraints 
\begin{equation}
\label{e:reduced}
U\otimes x\oplus b\leq V^{\sigma}\otimes x\oplus d^{\sigma}.
\end{equation}

\begin{proposition}
\label{p:sysreduced}
System~\eqref{e:reduced} 
is equivalent to
\begin{equation}
\label{e:reduced1}
    F\otimes x_J \oplus G\otimes x_{\overline{J}} \leq x_J, \quad  l_J \leq x_J \leq  u_J \quad x_{\overline{J}} \leq u_{\overline{J}}.
\end{equation}
and further to
\begin{equation*}
R\otimes x\leq x,\quad l\leq x\leq u\quad \text{for}\ 
R=
\begin{pmatrix}
F & G\\
-\infty & -\infty
\end{pmatrix}
\end{equation*}
for some set $J\subseteq [n]$, its complement $\overline{J}=[n]\setminus J$, matrices $F,G$, and vectors $l$ and $u$.
\end{proposition}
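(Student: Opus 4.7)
Proof plan. The decisive structural feature is that, because $\sigma$ is a positional strategy of player Max on the graph of $B=(V\ d)$, the restricted matrix $(V^\sigma\ d^\sigma)$ has at most one finite entry per row, located in column $\sigma(i)\in[n+1]$. I would therefore proceed row-by-row in the system $U\otimes x\oplus b\leq V^\sigma\otimes x\oplus d^\sigma$, splitting into two cases: $\sigma(i)\leq n$ (the sole finite right-hand-side entry lies in $V^\sigma$) and $\sigma(i)=n+1$ (it lies in $d^\sigma$). In the first case the inequality is equivalent to a collection of lower bounds on the single variable $x_{\sigma(i)}$, each of the form $x_{\sigma(i)}\geq u_{ik}-v_{i\sigma(i)}+x_k$ or $x_{\sigma(i)}\geq b_i-v_{i\sigma(i)}$. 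In the second case it is equivalent to a collection of upper bounds $x_k\leq d_i-u_{ik}$ on the variables together with the feasibility condition $b_i\leq d_i$ (otherwise the whole system is infeasible and the conclusion is vacuous).

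Next, I would aggregate these constraints. Set $J:=\{\sigma(i):i\in[m],\ \sigma(i)\leq n\}\subseteq[n]$. For each $j\in J$, combining all rows with $\sigma(i)=j$ by taking the pointwise maximum of their right-hand sides yields a single inequality
\[
\bigoplus_{k\in J} F_{jk}\otimes x_k \,\oplus\, \bigoplus_{k\in\overline{J}} G_{jk}\otimes x_k \,\oplus\, l_j \;\leq\; x_j,
\]
in which the entries of $F$ and $G$ are $\max\{u_{ik}-v_{ij}:\sigma(i)=j\}$ (empty maxima being $-\infty$) and $l_j=\max\{b_i-v_{ij}:\sigma(i)=j\}$. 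In the same way, aggregating the upper bounds over all rows with $\sigma(i)=n+1$ gives $u_k=\min\{d_i-u_{ik}:\sigma(i)=n+1\}\in\R\cup\{+\infty\}$ for every $k\in[n]$. Since each step was an equivalence, the conjunction of these constraints is precisely the system~\eqref{e:reduced1}.

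For the second equivalence I would embed $(F\ G)$ into an $n\times n$ matrix $R$ by reindexing its columns to the natural ordering of $[n]$ and padding the rows whose index lies in $\overline{J}$ with $-\infty$. The $J$-rows of $R\otimes x\leq x$ then reproduce $F\otimes x_J\oplus G\otimes x_{\overline{J}}\leq x_J$, while the $\overline{J}$-rows reduce to the vacuous inequality $-\infty\leq x_j$. Extending $l\in\Rmax^n$ by $l_j:=-\infty$ for $j\in\overline{J}$ converts the bounds $l_J\leq x_J\leq u_J$, $x_{\overline{J}}\leq u_{\overline{J}}$ of~\eqref{e:reduced1} into $l\leq x\leq u$, yielding the stated block form.

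The only genuinely delicate step is the bookkeeping of indices when passing between the blocked system~\eqref{e:reduced1} and the flattened $n$-dimensional form. Apart from this there is no analytic obstacle, so the main effort lies in a careful case analysis on $\sigma(i)$ and consistent indexing of the derived blocks.
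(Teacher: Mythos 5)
Your proof is correct and follows essentially the same route as the paper's: partition the rows of $U\otimes x\oplus b\leq V^{\sigma}\otimes x\oplus d^{\sigma}$ according to whether $\sigma(i)\in[n]$ or $\sigma(i)=n+1$, aggregate the resulting lower bounds on each $x_j$ with $j\in J=\sigma([m])\cap[n]$ and the upper bounds coming from the rows sent to the free column, and then pad $(F\ G)$ with $-\infty$ rows to obtain $R$. The explicit formulas you give for the entries of $F$, $G$, $l$ and $u$ match the paper's \eqref{e:xjineq} and \eqref{e:upperbounds}, so no further changes are needed.
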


\begin{proof}
Define the index set $J$ as follows:
\begin{equation*}
    J = \{ j \in [n]: \sigma(i) = j \quad \text{for some $i$}\}.
\end{equation*}

Now for any $j\in J$ consider all $i$ such that $\sigma(i)=j$. 
Denote the set of such $i$ by $I_j$ and observe that 
$
\bigcup_{j\in J\cup \{n+1\}} I_j= [m].
$
We have two cases:

\textbf{Case 1:} $j\in J$. In this case we obtain 
\begin{equation}
\label{e:xjatom}
  \bigoplus_{k=1}^{n} (v_{ij}^{-}\otimes u_{ik} \otimes x_k)\oplus v_{ij}^{-}\otimes b_i \leq  x_j,\qquad j\in J,\ i\in I_j
\end{equation}
for all such $i\in I_j$, and summing it up over $i\in I_j$ we have
\begin{equation}
\label{e:xjineq}
\bigoplus_{k=1}^{n}\bigoplus_{i\in I_j} (v_{ij}^{-}\otimes u_{ik}\otimes  x_k)\oplus \bigoplus_{i\in I_j} (v_{ij}^{-}\otimes b_i) \leq  x_j,\qquad j\in J.
\end{equation}
Observe that~\eqref{e:xjineq} is equivalent to the system of inequalities~\eqref{e:xjatom} where $i$ runs over $I_j$, and equivalent to 
the subsystem of~\eqref{e:reduced}, consisting of inequalities $i$ such that $\sigma(i)\in [n]$. 

\textbf{Case 2.} $j=n+1$. In this case we obtain
\begin{equation*}
  \bigoplus_{k=1}^{n} (u_{ik}\otimes  x_k)\oplus b_i \leq  d_i,\qquad i\in I_{n+1}
\end{equation*}
for all $i\in I_{n+1}$. Considering these inequalities for all $i\in I_{n+1}$ and seeing that $b_i\leq d_i$ in this case is just a necessary condition for~\eqref{e:reduced} to be consistent, we
see that the system of such inequalities taken over $i\in I_{n+1}$
is equivalent to the system
\begin{equation}
\label{e:upperbounds}
x_k\leq \min_{i\in I_{n+1}} u_{ik}^{-}\otimes 'd_i. \end{equation}
This is a system of upper bounds on $x_k$ (some of them can be 
equal to $+\infty$ if all the corresponding $u_{ik}$ are $-\infty$). 

Equation~\eqref{e:reduced} is thus equivalent to the system of~\eqref{e:xjineq} and~\eqref{e:upperbounds}. Combining these two, one can easily recognize~\eqref{e:reduced1}.

Next we observe that the constraint given by $F\otimes x_I \oplus G\otimes x_J \leq x_I$ can be written as
\begin{equation*}
    \begin{bmatrix}
    F & G \\
    -\infty & -\infty \\
    \end{bmatrix}\otimes 
    \begin{bmatrix}
    x_I\\
    x_J\\
    \end{bmatrix}
    \leq 
     \begin{bmatrix}
    x_I\\
    x_J\\
    \end{bmatrix} \\
\end{equation*}
which is equivalent to $R\otimes x\leq x$.
\end{proof}

The above proposition implies that the problem which we have to 
solve at each iteration is the following problem
\begin{equation}
\label{e:alcovedpseudolin}
\begin{split}
 \min_{x\in\R^n} x^-\otimes p \oplus q^-\otimes x \quad 
 \text{s.t.}\quad  l \leq x \leq u\ \text{and}\  
R\otimes x \leq x.
\end{split}
\end{equation}
Recognizing Problem~\ref{prob:constrpseudolin}, we recall that Proposition~\ref{p:constrpseudolin} yields the optimal value 
\begin{equation}
\label{e:thetalambdak}
    \theta = (q^-\otimes R^*\otimes p)^{\otimes \frac{1}{2}} \oplus u^-\otimes R^*\otimes p \oplus q^-\otimes R^*\otimes l
\end{equation}
and the solution set
\begin{equation}
\label{e:x}
    x = \{ R^*\otimes  v : l \oplus \theta^{-}\otimes p \leq v \leq (R^*)^{\#}\otimes'(\theta\otimes q\oplus' u),\ v\in\R^n \}.
\end{equation}
of this problem.
Thus we can compute $\lambda_k=\theta$ using~\eqref{e:thetalambdak} and, at optimality we can take any vector from~\eqref{e:x} as a solution of the system of constraints in~\eqref{e:subproblem2} and hence also system~\eqref{e:solvability} with 
$\lambda=\lambda_k=\theta$. For a finite vector $v$ we can take a vector with the following components:
\begin{equation*}
v_i=
\begin{cases}
((R^*)^{\#}\otimes'(\theta\otimes q\oplus' u))_i, & \text{if $((R^*)^{\#}\otimes'(\theta\otimes q\oplus' u))_i\neq +\infty$}, \\
l_i \oplus \theta^{-}\otimes p_i, & \text{otherwise if $l_i \oplus \theta^{-}\otimes p_i\neq -\infty$},\\
0, & \text{otherwise.}
\end{cases}
\end{equation*}
Note that the computation of $\theta$ in ~\eqref{e:thetalambdak} requires no more than $O(n^3)$ operations and can be performed very efficiently by shortest path algorithms.

\section{Example and numerical experiments}
\label{s:appl}

\subsection{Example}
\label{ss:example}

Consider the pseudolinear optimization problem where
\begin{equation}
\label{e:example}
\begin{split}
& U=\left(
      \begin{array}{cc}
        -\infty & -2 \\
        3 & -\infty \\
      \end{array}
    \right),\;
b=\left(
    \begin{array}{c}
      -\infty \\
      -\infty \\
    \end{array}
  \right),\;
V=\left(
    \begin{array}{cc}
      1 & 0 \\
      -\infty & 1 \\
    \end{array}
  \right),\;
d=\left(
    \begin{array}{c}
      -\infty \\
      1 \\
    \end{array}
  \right),\\
& p=\left(
    \begin{array}{c}
      0 \\
      -\infty \\
    \end{array}
  \right),\quad
q=\left(
    \begin{array}{c}
      -1 \\
      0 \\
    \end{array}
  \right).
\end{split}
\end{equation}

\subsubsection{Application of bisection (Algorithm~\ref{a:bisection})}

\textbf{Start:} We first compute $\lambda_{1}^{(-)}=(q^{-}\otimes p)^{\otimes\frac{1}{2}}=\frac{1}{2}$. We find that $\Phi(\lambda_{1}^{(-)})=-\frac{1}{3}<0$, so we proceed. 
Using the alternating method of~\cite{AltMethod}, we obtain a finite solution $x^{0}=(-8\  -8)^{\top}$ for $U\otimes x\oplus b\leq V\otimes x\oplus d$. Let $\lambda_{1}^{(+)}=(x^{0})^{-}\otimes p\oplus q^{-}\otimes x^{0}=8$. 

\textbf{Iterations:}
For $k=1$, let $\lambda_{1}=\frac{\lambda_{1}^{(+)}+\lambda_{1}^{(-)}}{2}=\frac{17}{4}$. Since $\Phi(\lambda_{1})=2>0$, set $\lambda_{2}^{(+)}=\lfloor\lambda_{1}\rfloor=4$ and $\lambda_{2}^{(-)}=\lambda_{1}^{(-)}=\frac{1}{2}$. 

For $k=2$, we check that $\lambda_2^{(+)}\neq\lambda_2^{(-)}$ and compute $\lambda_{2}=\frac{\lambda_{2}^{(+)}+\lambda_{2}^{(-)}}{2}=\frac{9}{4}$. Since $\Phi(\lambda_{2})=\frac{5}{6}>0$, set $\lambda_{3}^{(+)}=\lfloor\lambda_{2}\rfloor=2$ and $\lambda_{3}^{(-)}=\lambda_{2}^{(-)}=\frac{1}{2}$.

For $k=3$, we check that $\lambda_3^{(+)}\neq \lambda_3^{(-)}$ and compute
$\lambda_{3}=\frac{\lambda_{3}^{(+)}+\lambda_{3}^{(-)}}{2}=\frac{5}{4}$. Since $\Phi(\lambda_{3})=\frac{1}{6}>0$, set $\lambda_{4}^{(+)}=\lfloor\lambda_{3}\rfloor=1$ and $\lambda_{4}^{(-)}=\lambda_{3}^{(-)}=\frac{1}{2}$.

For $k=4$,  we check that $\lambda_4^{(+)}\neq \lambda_4^{(-)}$ and compute $\lambda_{4}=\frac{\lambda_{4}^{(+)}+\lambda_{4}^{(-)}}{2}=\frac{3}{4}$. Since $\Phi(\lambda_{4})=-\frac{1}{6}<0$, set $\lambda_{5}^{(+)}=\lambda_{4}^{(+)}=1$ and $\lambda_{5}^{(-)}=\lceil\lambda_{4}\rceil=1$.

We have $\lambda_5^{(-)}=\lambda_5^{(+)}=1$, so we stop, this is the optimal value of the problem. Using the alternating method, we obtain a finite solution $x=(-1\ 1)^{T}$ for system~\eqref{e:solvability} with $\lambda=\lambda_{5}^{(-)}=1$. Then return $(\lambda_{5}^{(-)}, x)$ as a solution of the problem.

\subsubsection{Application of Newton method (Algorithm~\ref{a:Newt-int-opt})}

\textbf{Start.} We begin with the same $\lambda_1=8$ as 
in the bisection method.

{\bf Iterations:} We first find an optimal strategy $\sigma_1$ at $\lambda_1^{[-]}=7.5$ with $\sigma_1(1)=1$, $\sigma_1(2)=3$, 
where $1$ and $2$ are nodes of Max that correspond to the inequalitites of the system $U\otimes x\oplus b\leq V\otimes x\oplus d$ (recall that Max does not have choice at any other node of the associated MPG).

Following Proposition~\ref{p:sysreduced}, we find that
$\min\{\lambda: \Phi^{\sigma_1}(\lambda)\geq 0\}$ is equivalent to~\eqref{e:alcovedpseudolin}, where
$$R=\left(
  \begin{array}{cc}
    -\infty & -3 \\
    -\infty & -\infty \\
  \end{array}
    \right),\quad
l=\left(
    \begin{array}{c}
      -\infty \\
      -\infty \\
    \end{array}
    \right),\quad
u=\left(
    \begin{array}{c}
      -2 \\
      +\infty \\
    \end{array}
  \right).$$
Then we obtain $$\lambda_{2}=(q^{-}\otimes R^{\ast}\otimes p)^{\otimes \frac{1}{2}}\oplus u^{-}\otimes R^{\ast}\otimes p\oplus q^{-}\otimes R^{\ast}\otimes l=2.$$ 

At iteration 2 we check that $\lambda_{2}<\lambda_{1}$ and find an optimal  strategy $\sigma_2$ at $\lambda_{2}^{[-]}=1.5$ with $\sigma_2(1)=2$ and $\sigma_2(2)=2$. 
Following Proposition~\ref{p:sysreduced}, we next solve~\eqref{e:alcovedpseudolin} with 
$$R=\left(
  \begin{array}{cc}
    -\infty & -\infty \\
    2 & -\infty \\
  \end{array}
    \right),\quad
l=\left(
    \begin{array}{c}
      -\infty \\
      -\infty \\
    \end{array}
    \right),\quad
u=\left(
    \begin{array}{c}
      +\infty \\
      +\infty \\
    \end{array}
  \right).$$
We obtain 
$$\lambda_{3}=(q^{-}\otimes R^{\ast}\otimes p)^{\otimes \frac{1}{2}}\oplus u^{-}\otimes R^{\ast}\otimes p\oplus q^{-}\otimes R^{\ast}\otimes l=1.$$

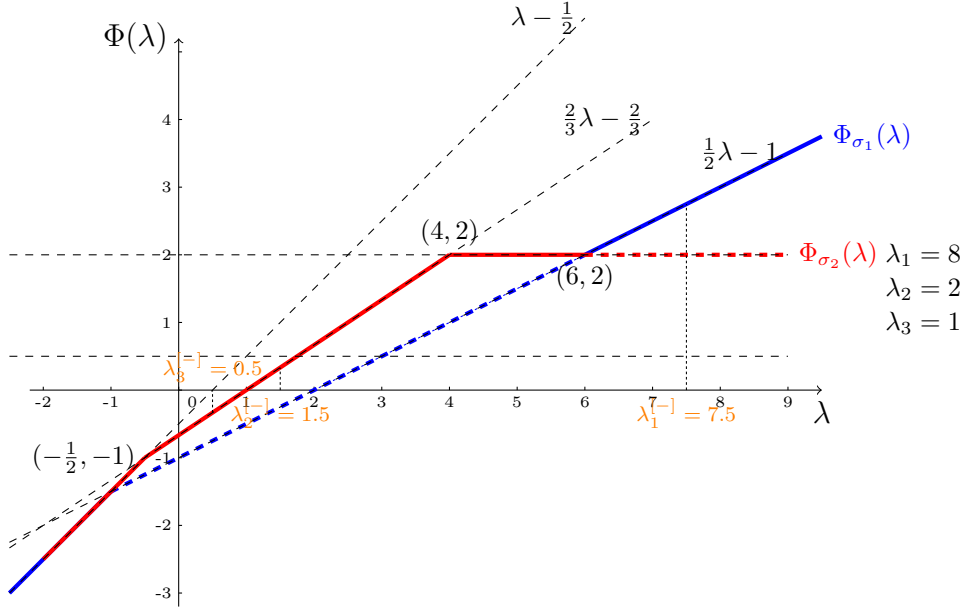
\begin{figure}
\begin{center}
\begin{tikzpicture}[scale=0.9]
\draw[->] (-2.2,0)--(9.5,0) node[below] {$\lambda$};
\draw[->] (0,-3.2)--(0,5.2) node[left] {$\Phi(\lambda)$};

\foreach \x in {0,1,...,11}
{
    \draw[xshift=\x cm] (-2,0) -- (-2,0.05);
};
\foreach \x in {0,1,...,8}
{
    \draw[yshift=\x cm] (0,-3) -- (0.05,-3);
};
\tiny
\node[below] at (0.2,0){0};
\foreach \x in {-2,-1}
    \node[below] at(\x,0){\x};
\foreach \y in {1,2,...,9}
    \node[below] at(\y,0){\y};
\foreach \y in {-3,-2,-1}
    \node[left] at(0,\y){\y};
\foreach \y in {1,2,...,4}
    \node[left] at(0,\y){\y};
\footnotesize
\draw[domain=-2.5:-1][color=blue,ultra thick] plot (\x,\x-0.5);
\draw[domain=-1:6][dash pattern=on3pt off3pt][color=blue,ultra thick] plot (\x,0.5*\x-1);
\draw[domain=6:9.5][color=blue,ultra thick] plot (\x,0.5*\x-1) node[right] {$\Phi_{\sigma_{1}}(\lambda)$};
\draw[domain=-2:-0.5][color=red,ultra thick] plot (\x,\x-0.5);
\draw[domain=-0.5:4][color=red,ultra thick] plot (\x,0.666667*\x-0.666667);
\draw[domain=4:6][color=red,ultra thick] plot (\x,2);
\draw[domain=6:9][dash pattern=on3pt off3pt][color=red,ultra thick] plot (\x,2)node[right] {$\Phi_{\sigma_{2}}(\lambda)$};
\draw[domain=2:6][dash pattern=on1pt off1pt][color=blue] plot (\x,0.5*\x-1);

\draw[domain=-2.5:6][dash pattern=on3pt off3pt] plot (\x,\x-0.5) node[left] {$\lambda-\frac{1}{2}$};
\draw[domain=-2.5:7][dash pattern=on3pt off3pt] plot (\x,0.666667*\x-0.666667) node[left] {$\frac{2}{3}\lambda-\frac{2}{3}$};
\draw[domain=-2.5:9][dash pattern=on3pt off3pt] plot (\x,0.5*\x-1) node[left] {$\frac{1}{2}\lambda-1$};
\draw[domain=-2.5:9][dash pattern=on3pt off3pt] plot (\x,2);
\draw[domain=-2.5:9][dash pattern=on3pt off3pt] plot (\x,0.5);
\scriptsize
\node[above]at (0.5,0)[color=orange]{$\lambda_{3}^{[-]}=0.5$};
\draw[dash pattern=on1pt off1pt] (0.5,-0.333333)--(0.5,0);
\node[below]at (1.5,0)[color=orange]{$\lambda_{2}^{[-]}=1.5$};
\draw[dash pattern=on1pt off1pt] (1.5,0)--(1.5,0.333333);
\node[below]at (7.5,0)[color=orange]{$\lambda_1^{[-]}=7.5$};
\draw[dash pattern=on1pt off1pt] (7.5,0)--(7.5,2.75);
\footnotesize
\node[below]at (11,2.3){$\lambda_{1}=8$};
\node[below]at (11,1.8){$\lambda_{2}=2$};
\node[below]at (11,1.3){$\lambda_{3}=1$};

\node[left]at (-0.5,-1){$(-\frac{1}{2},-1)$};
\node[above]at (4,2){$(4,2)$};
\node[below]at (6,2){$(6,2)$};
\end{tikzpicture}
\caption{Application of Newton method to the 
problem given by~\eqref{e:example} \label{f:Newt}}
\end{center}
\end{figure}

At iteration 3, we check that $\lambda_3<\lambda_2$ and find an optimal strategy $\sigma_3$ at $\lambda_{3}^{[-]}=0.5$ with $\sigma_3(1)=2$ and $\sigma_3(2)=2$,  which is the same as at the previous iteration. 
Obviously, we obtain $\lambda_{4}=1=\lambda_3$, so we {\bf stop.} As $\sigma_2=\sigma_3$ is an optimal strategy also at $\lambda_3=\lambda_4=1$, we can find a finite solution using
\begin{equation*}
x=R^{\ast}\otimes((R^{\ast})^{\sharp}\otimes'(\theta\otimes  q\oplus'u))=(-1\; 1)^{\top}
\end{equation*}
and  return $\lambda=1$ and $x=(-1\; 1)^{\top}$ as an optimal solution to the problem.

\subsection{Numerical experiments}
We implemented Algorithms~\ref{a:bisection} and ~\ref{a:Newt-int-opt} in MATLAB and ran some numerical experiments. Before running these experiments we checked the percentage of cases for which $(q^-\otimes p)^{\otimes 1/2}$ (the optimal value of the unconstrained problem) is an optimal value of the constrained problem so that no algorithm is required. Before checking this, the program checked that the system of constraints is feasible and $(q^-\otimes p)^{\otimes 1/2}$ is finite. For $m=n$ ranging from $1$ to $70$ we ran $2000$ experiments for each dimension for the cases where the entries are all finite or we have approximately $30\%$ of finite entries, and where the finite entries are randomly and uniformly selected in the interval $[-500,500]$ or $[-500000,500000]$ (for all four combinations). The results are shown on Figure~\ref{fig:optvalue}. We see that, in general, for very low dimensions it is highly likely that $(q^-\otimes p)^{\otimes 1/2}$, but the probability of this quickly falls to the level of $40\%$ or slightly below, and then starts to grow very slowly. We also observed how this percentage behaves for dimensions up to $400$ performing just $20$ experiments for each dimension and each situation, and recorded that $(q^-\otimes p)^{\otimes 1/2}$ was optimal in $41\%$ of all solvable instances with finite $q^-\otimes p$. 

\begin{figure}[h!]
    \centering
    \begin{tabular}{ll}
    \includegraphics[scale=0.4]{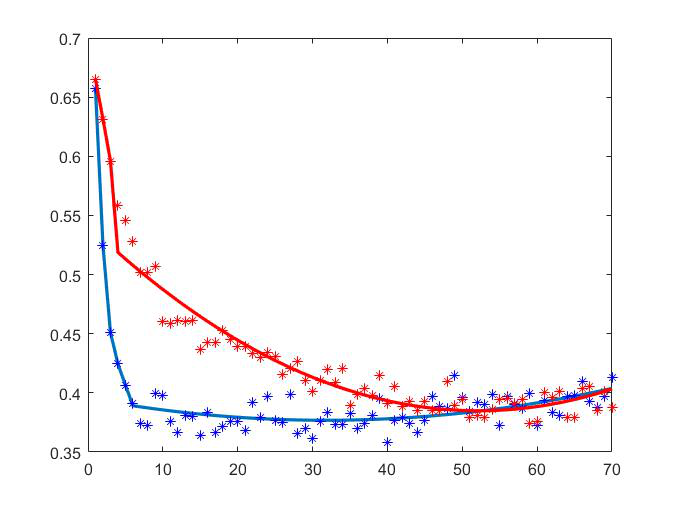}
    &
    \includegraphics[scale=0.4]{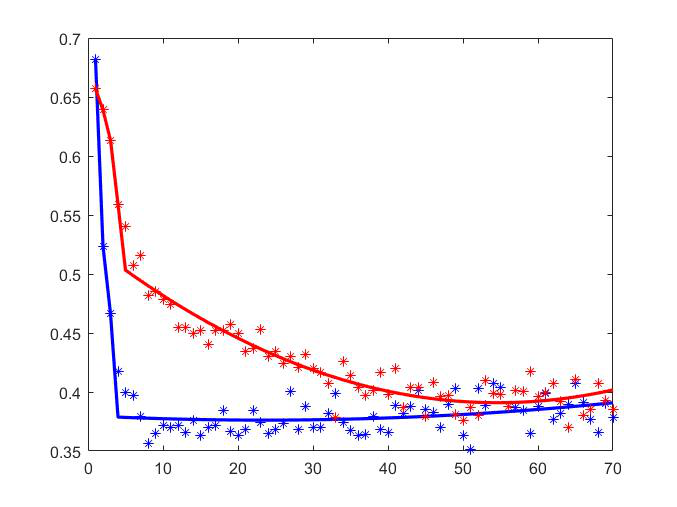}
    \end{tabular}
\caption{Percentage of the cases where $(q^-\otimes p)^{\otimes 1/2}$ is an optimal value for $m=n$ from $1$ to $70$. Finite entries are randomly picked from $[-500,500]$ (left) and from $[-500000,500000]$ (right). Results for the cases where all entries are finite (in blue) are shown versus the cases where approximately $30\%$ of all entries are finite (red).
\label{fig:optvalue}}
\end{figure}

\begin{figure}[h!]
    \centering
\includegraphics[scale=0.5]{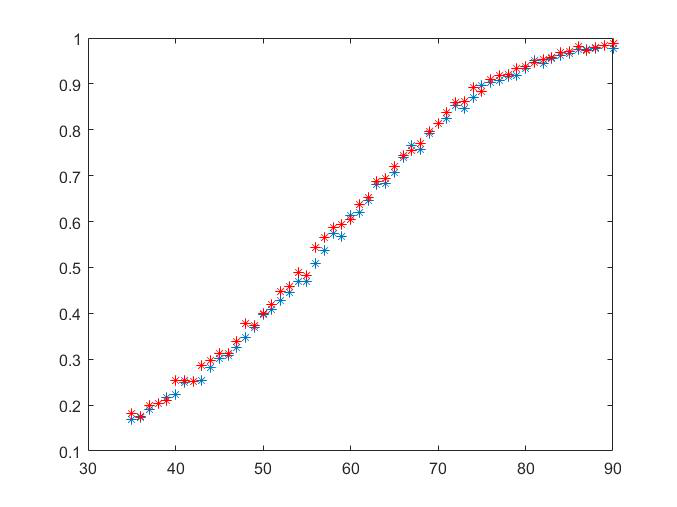}
    \caption{Percentage of the cases where $(q^-\otimes p)^{\otimes 1/2}$ is an optimal value for $m=50$ and $n$ from $35$ to $90$. Blue stars: all entries finite, drawn randomly and uniformly from $[-500,500]$. Red stars: $30\%$ of finite entries, drawn randomly and uniformly from $[-500000,500000]$.
\label{fig:optvaluerect}}
\end{figure}

We also performed similar experiments with rectangular matrices with $50$ rows and number of columns ranging from $35$ to $90$, see Figure~\ref{fig:optvaluerect}. Unlike in the previous experiments, the results do not depend on the range of entries or sparsity. The percentage of cases where $(q^-\otimes p)^{\otimes 1/2}$ is optimal increases monotonically from $15-20\%$ for $n=35$ to almost $100\%$ when $n=90$. Cases $n<35$ were not checked, because for such low $n$ most of the randomly generated constraint systems become infeasible.     

\begin{figure}[h!]
    \centering
    \begin{tabular}{ll}
    \includegraphics[scale=0.4]{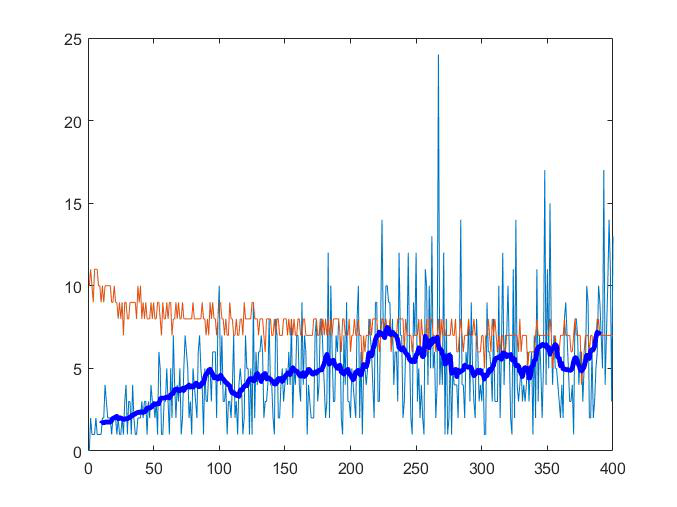}
    &
    \includegraphics[scale=0.4]{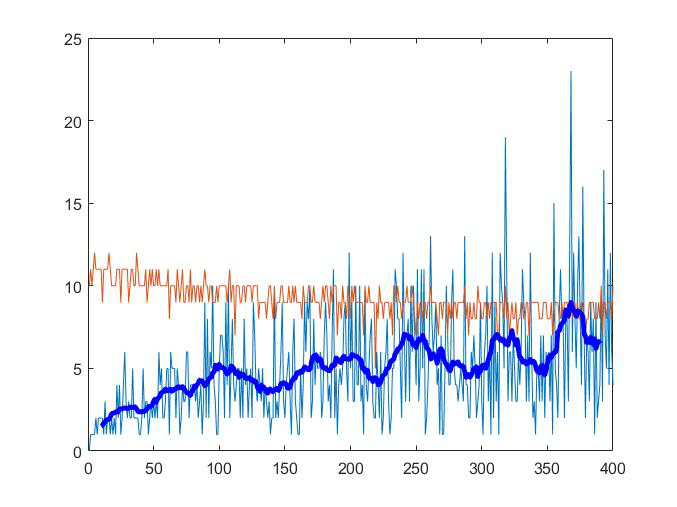}
    \end{tabular}
\caption{Number of iterations of bisection (yellow) and Newton iterations (blue) for randomly generated problems of dimension $1$ to $400$ with the range of entries between $-500$ and $500$ with finite entries only (left) and with approximately $30\%$ of finite entries (right) \label{fig:bisnewt500}}
\end{figure}

\begin{figure}[h!]
    \centering
    \begin{tabular}{ll}
    \includegraphics[scale=0.4]{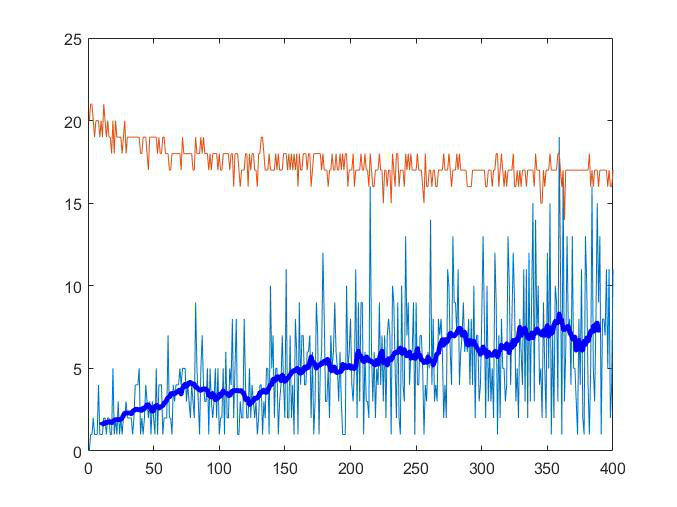}
    &
    \includegraphics[scale=0.4]{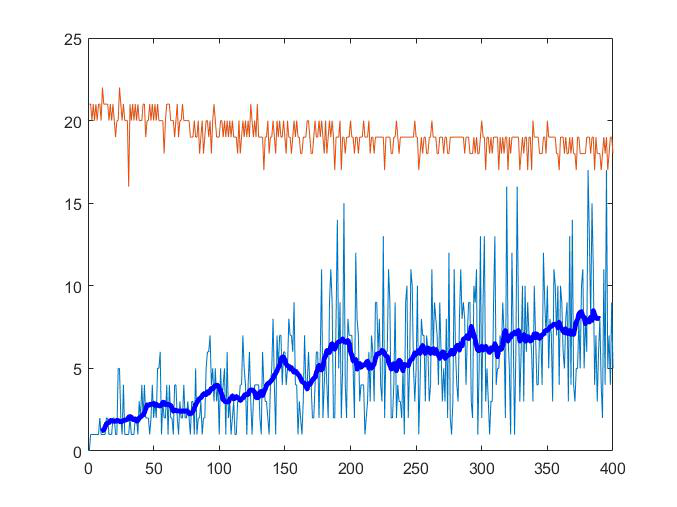}
    \end{tabular}
\caption{Number of iterations of bisection (yellow) and Newton iterations (blue) for randomly generated problems of dimension $1$ to $400$ with the range of entries between $-500000$ and $500000$ with finite entries only (left) and with approximately $30\%$ of finite entries (right) \label{fig:bisnewt500000}}
\end{figure}

Next, we checked the performance of Newton and bisection algorithms, in the same vein as in Gaubert et al.~\cite{BiNewt}. In our experiments, $U$ and $V$ were square matrices with dimensions ranging from $1$ to $400$, with the range of finite entries $[-500,500]$ or $[-500000,500000]$, with finite entries only or with approximately $30\%$ of finite entries.  Results are shown in Figures~\ref{fig:bisnewt500} and~\ref{fig:bisnewt500000}. For each dimension, both methods were run exactly once, after repeatedly ignoring the cases where the system of constraints was infeasible or the value $(q^-\otimes p)^{\otimes 1/2}$ was infinite or optimal for the generated problem.  We see that, as in the case of linear-fractional programming~\cite{BiNewt}, the ``average number'' of Newton iterations (computed here for dimension $d$ as the average taken over dimensions in the range $[d-10,d+10]$ and shown by thick blue line) slowly grows with dimension and, in the case of entries ranging in $[-500,500]$ becomes similar to the number of bisection iterations before the dimension $400$ is reached.  Unlike in the case of linear-fractional programming, here we were able to implement bisection also in the case of sparse data ($30\%$ of finite entries).  Similarly to \cite{BiNewt}, we observed that the number of bisection iterations grows with the range of the finite entries (for an obvious reason, since it means that the interval between lower and upper bounds increases). In particular, the gap between bisection and Newton iterations still remains quite big for $d=400$ for $[-500000,500000]$. However, the number of bisection iterations also decreases with the increase in dimension: this is different from what was observed in tropical linear-fractional programming~\cite{BiNewt} where the number of these iterations was stable.

\section{Pseudoquadratic optimization}
\label{s:pseudoquad}

In this section we will consider pseudoquadratic optimization problem with two-sided constraints: Problem~\ref{prob:main}.
Like Problem~\ref{prob:mainpseudolin}, which we considered earlier, it can be also represented by parametric MPG and solved by means of bisection and Newton methods, calling an MPG solver at each iteration. Below we discuss the details of it, as well as similarity and difference between the pseudoquadratic and pseudolinear tropical optimization.

As a starter, using that $x^-\otimes C\otimes x\leq \lambda$ is equivalent to $C\otimes x\leq \lambda\otimes x$, we can recast Problem~\ref{prob:main} as the problem of finding the least $\lambda$ such that
$A\otimes z\leq B(\lambda)\otimes z$,
is solvable with $z\in\R^{n+1}$, where
\begin{equation}
\label{e:ABlambda-pseudoquad}
A =
    \begin{pmatrix}
    U & b\\
    C & -\infty\\
    -\infty & p\\
    q^- & -\infty \\
    \end{pmatrix} 
    \text{  and  }
    B(\lambda) =
   \begin{pmatrix}
    V & d\\
    \lambda\otimes I & -\infty\\
    \lambda\otimes I & -\infty \\
    -\infty & \lambda\\
    \end{pmatrix}.
\end{equation}
The mean-payoff game diagram corresponding to this system is given on Figure~\ref{f:MPG-pseudoquad}. Comparing it with Figure~\ref{f:MPG-pseudolin}, we see a new group of nodes $[n]$ on top of the diagram, corresponding to the inequalities $C\otimes x\leq\lambda\otimes x$. With respect to this game, we are solving the problem $\min\{\lambda\colon\Phi(\lambda)\geq 0\}$, where $\Phi(\lambda)=\min_i \chi_i(A^{\sharp}B(\lambda))$, with 
$A$ and $B(\lambda)$ defined in~\eqref{e:ABlambda-pseudoquad}. The theory of Section~\ref{ss:recast} applies verbatim.

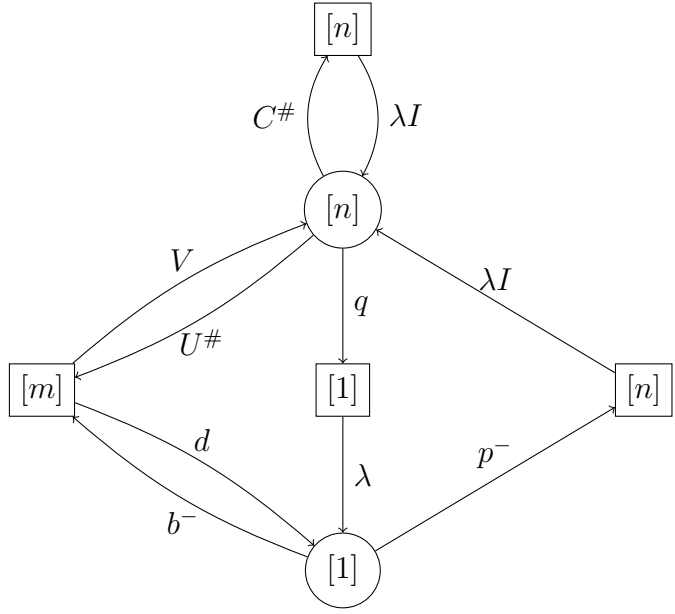
\begin{figure}
\begin{center}
\begin{tikzpicture}[scale=0.8]
    \node[shape=rectangle,draw=black] (m) at (0,3) {$[m]$};
    \node[shape=circle,draw=black] (n) at (5,6) {$[n]$};
    \node[shape=rectangle,draw=black] (nn) at (5,9) {$[n]$};
		\node[shape=rectangle,draw=black] (m+k+1) at (5,3) {[1]};
    \node[shape=circle,draw=black] (n+1) at (5,0) {[1]};
    \node[shape=rectangle,draw=black] (k) at (10,3) {$[n]$};
    
		\path[->] (n) edge[bend left=30] node[left] {$C^{\#}$} (nn);
		\path[->] (nn) edge[bend left=30] node[right] {$\lambda I$} (n);
    \path [->](m) edge[bend left = 10] node[above] {$V$} (n);
    \path [->](n) edge[bend left = 10] node[below] {$U^{\#}$} (m);
    \path [->](m) edge[bend left = 10] node[above] {$d$} (n+1);
    \path [->](n+1) edge[bend left = 10] node[below] {$b^-$} (m);
    \path [->](k) edge node[above] {$\lambda I$} (n);
    \path [->](n+1) edge node[above] {$p^-$} (k);
    \path [->](m+k+1) edge node[right] {$\lambda $} (n+1);
    \path [->](n) edge node[right] {$q$} (m+k+1);
\end{tikzpicture}
\caption{MPG diagram corresponding to pseudoquadratic optimization \label{f:MPG-pseudoquad}}
\end{center}
\end{figure}

It can be also checked that both certificates stated in Propositions~\ref{p:optcert} and ~\ref{p:unbcert} extend to the pseudoquadratic optimization with no change, now referring to the groups of nodes in Figure~\ref{f:MPG-pseudoquad}. However, in the pseudoquadratic programming the cycles in the graph of the game can collect up to $n+1$ repetitions of $\lambda$. Therefore, in the case of integer data, we can only say that the denominator of optimal value is bounded from above by $n+1$. The same is true for any reduced MPG defined by $A$ and $B^{\sigma}(\lambda)$.

An initial upper bound for the bisection method,
following Section~\ref{s:bisection}, can be computed as
$$\lambda_0^{(+)}= x^-\otimes C\otimes x\oplus q^-\otimes x\oplus x^-\otimes p,$$ where $x\in\R^n$ is a solution to $U\otimes x\oplus b\leq V\otimes x\oplus d$. The lower bound comes from the unconstrained pseudoquadratic problem $\min_{x\in\R^n} x^-\otimes C\otimes x\oplus q^-\otimes x\oplus x^-\otimes p$, the optimal value of which was found by Krivulin~\cite{KrivUnconstr-15} to be $$\lambda_0^{(-)}=\rho(C)\oplus (q^-\otimes p)^{\otimes 1/2}.$$
With these initial bounds we can run a usual bisection scheme as an approximate method, or we can still use Algorithm~\ref{a:bisection} as its exact version, but rounding up ($\lceil\cdot\rceil$) now means finding the least rational number greater than or equal to the given number and with denominator bounded by $n+1$. Similarly, rounding down ($\lfloor\cdot\rfloor$) means finding the biggest rational number less than or equal to the given one and with denominator bounded by $n+1$. These operations are still not too difficult: suppose that $\lambda$ is not a integer and set $\lambda=a+\frac{c}{b}$, where $a\in \Z$, $b\in\Nat$, $c\in\Nat\cup\{0\}$ and $\frac{c}{b}$ is an irreducible proper fraction if $c\neq 0$. Then we have
\begin{equation}
\label{e:ceilfloor}
\lceil \lambda\rceil=
\begin{cases}
\lambda, & \text{if $c=0$ or $b\leq n+1$},\\
a+\frac{e_1}{d_1}, & \text{otherwise,}
\end{cases} 
\lfloor \lambda\rfloor=
\begin{cases}
\lambda, & \text{if $c=0$ or $b\leq n+1$},\\
a+\frac{e_2}{d_2}, & \text{otherwise,}
\end{cases}
\end{equation}
where
\begin{equation*}
\frac{e_1}{d_1}=\min\left\{\frac{\lceil\lceil\frac{cd+1}{b}\rceil\rceil}{d}\colon \; 1\leq d\leq n+1\right\},
\end{equation*}
and 
\begin{equation}
\label{e:edfloor}
\frac{e_2}{d_2}=\max\left\{\frac{\lfloor\lfloor\frac{cd-1}{b}\rfloor\rfloor}{d}\colon \; 1\leq d\leq n+1\right\},
\end{equation}
and $\lceil\lceil\cdot\rceil\rceil$ and 
$\lfloor\lfloor\cdot\rfloor\rfloor$, respectively, mean the usual 
operations of rounding up and rounding down to the nearest integers, respectively.

We also have Newton iterations (Algorithm~\ref{a:Newt}) based on left-optimal strategies, which are found using the algebra of germs in general case. In the case of integer data,
the denominators of $\lambda_k$
do not exceed $n+1$ and the denominators of breakpoints of $\Phi(\lambda)$ do not exceed $(n+1)^2$. Hence the largest possible denominator of the distance between $\lambda_k$ and a breakpoint does not exceed $(n+1)^3$, so we can take $\epsilon=1/(n+1)^3$ to ensure that a strategy that is optimal at $\lambda-\epsilon$ is optimal for the whole interval $[\lambda-\epsilon,\lambda]$.

Integer version of Newton iterations 
(Algorithm~\ref{a:Newt-int-opt}) also works, but in this algorithm $\lambda^{[-]}$ should be defined as the largest rational number that is 1) strictly smaller than $\lambda$, 2) has a denominator bounded by $n+1$. 
Following the notation in~\eqref{e:ceilfloor}, we can obtain:
\begin{equation*}
\lambda^{[-]}=
\begin{cases}
\lambda-\frac{1}{n+1}, & \text{if $\lambda$ is integer}, \\
a+\frac{e_2}{d_2}, & \text{otherwise},
\end{cases}
\end{equation*}
where $\frac{e_2}{d_2}$ is defined as in~\eqref{e:edfloor}. 

Solution of~\eqref{e:subproblem} is an important ingredient in any modification of Newton algorithm. This is treated exactly as in Section~\ref{sss:partial}, since $C\otimes x\leq\lambda\otimes x$ does not add to the available strategies of Max, and they are still determined by the right-hand side of the system $U\otimes x\oplus b\leq V\otimes x\oplus d$. 
Proposition~\ref{p:sysreduced} then leads us to a problem of the following type:
\begin{equation*}
\begin{split}
& \min_{x\in\R^n} x^-\otimes p \oplus q^-\otimes x\oplus x^-\otimes C\otimes x \\
& \text{s.t.}\quad l \leq x \leq u\ \text{and}\  
R\otimes x \leq x.
\end{split}
\end{equation*}
An explicit expression for the optimal value of this problem and solution set was obtained in Krivulin~\cite{KrivOurVolume-14}[Theorem 4] and it can be used instead of~\eqref{e:thetalambdak} and~\eqref{e:x}. However, the computational complexity of computing the optimal value of this problem rises to $O(n^5)$.


\appendix

\section{Proof of Proposition~\ref{p:constrpseudolin}}

We first represent Problem~\ref{prob:constrpseudolin} equivalently as
\begin{equation}
 \min_{x\in\R^n,\lambda\in\R}\{\lambda\colon\ \   x^-\otimes  p \oplus q^-\otimes  x\leq \lambda,\quad g \leq x \leq h,\quad U\otimes x \leq x\}.
\label{MPGEx0}
\end{equation}

By multiplying the inequality $U\otimes x \leq x$ by $U$, we deduce that $U^{\otimes 2}\otimes x \leq U\otimes x \leq x$, and hence, by continuing to multiply the inequality by $U$ we determine that $U^*\otimes x \leq x$. Conversely, $U^*\otimes x \leq x$ implies that $U\otimes x \leq x$. Hence, we can restate \eqref{MPGEx0} as follows
\begin{equation}
\min_{x\in\R^n ,\lambda\in\R}\{\lambda\colon
\ \  p \leq \lambda\otimes  x,\quad q^-\otimes  x \leq \lambda,\quad 
g \leq x,\quad h^-\otimes x \leq 0,\quad U^*\otimes x \leq x\}.
\label{MPGEx1}
\end{equation}

Figure~\ref{f:constrainedMPG} provides a mean-payoff game representation of this problem. Here, the group of $n$ nodes of Min (in a circle) corresponds to
$n$ variables and the lone-standing node of Min (circle in the bottom) corresponds to the free column. There are two individual nodes of Max corresponding to $q^-\otimes x\leq \lambda$ and $h^-\otimes x\leq 0$.  The remaining $3n$ nodes of Max are split into three groups of $n$ nodes: 1) the group on the top ($U^*\otimes x\leq x$), 2) the group on the left $(g\leq x)$ and 3) the group on the right ($p\leq\lambda\otimes x$). It is also agreed that an arc between two nodes exists if and only if the corresponding entry of the vector or the matrix marking the corresponding group of arcs on the diagram is finite.

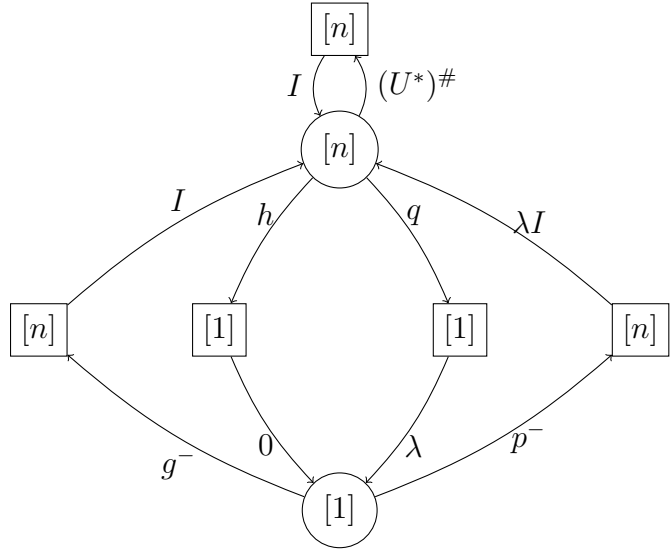
\begin{figure}
\begin{center}
    \begin{tikzpicture}[scale=0.8]
    \node[shape=rectangle,draw=black] (n_1) at (0,3) {$[n]$};
    \node[shape=circle,draw=black] (n) at (5,6) {$[n]$};
    \node[shape=rectangle, draw=black] (n_3) at (5,8) {$[n]$};
    \node[shape=rectangle,draw=black] (1_a) at (3,3) {$[1]$};
    \node[shape=rectangle,draw=black] (1_b) at (7,3) {$[1]$};
    \node[shape=circle,draw=black] (1) at (5,0) {$[1]$};
    \node[shape=rectangle,draw=black] (n_2) at (10,3) {$[n]$};
    
    \path [->](n_1) edge[bend left = 10] node[above] {$I$} (n);
    \path [->](1) edge[bend left = 10] node[below] {$g^-$} (n_1);
    \path [->](n) edge[bend right = 10] node[above] {$h$} (1_a);
    \path [->](n) edge[bend left = 10] node[above] {$q$} (1_b);
    \path [->](1_a) edge[bend right = 10] node[below] {$0$} (1);
    \path [->](1_b) edge[bend left = 10] node[below] {$\lambda$} (1);
    \path [->](1) edge[bend right = 10] node[right] {$p^-$} (n_2);
    \path [->](n_2) edge[bend right = 10] node[right] {$\lambda I$} (n);
    \path [->](n) edge[bend right = 30] node[right] {$(U^*)^\#$} (n_3);
    \path [->](n_3) edge[bend right = 30] node[left] {$I$} (n);
    \end{tikzpicture}
\caption{The parametric mean-payoff game corresponding to \eqref{MPGEx1}} 
\label{f:constrainedMPG}
\end{center}
\end{figure}

Observe that at every node at which player Max is active, there is no choice to make as 
there is only one arc leaving that node. Also, let $j'$ be the node (of one of the $[n]$ groups of nodes of Max 
on the left and on the right) chosen by Min at the bottom node (corresponding to the free column). 
Assuming this choice of Min and examining the mean-payoff game diagram we see that the total weight of any cycle can be written as 
$s_{i_1 i_2} + ... +s_{i_k i_1}$, where 
\begin{equation}
     s_{ij} \in 
    \begin{cases}
    \{-u_{ji}^*\} & \text{if } j \neq j' \\
    \{-u_{j'i}^*, q_i - p_{j'} + 2\lambda, q_i + \lambda - g_{j'}, h_i + \lambda - p_{j'}, h_i - g_j'\} & \text{if  } j = j',
    \end{cases}
\label{e:scases}
\end{equation}
where the above possibilities for $s_{ij}$ are valid only if all matrix and vector entries that take part in them are finite. Hence we obtain that the optimal value of \eqref{MPGEx1} is equal to the least value of $\lambda$ such that
\begin{equation}
    0 \leq \underset{j'}{\text{min}} \;   \underset{i_1,...,i_k}{\text{min}} \; \{s_{i_1 i_2} + ... s_{i_k i_1}\}
\label{e:minimise}    
\end{equation}
 where $s_{ij}$ can take the values described in~\eqref{e:scases}.

As at every node of player Max there is no choice,
 we can delete these nodes and aggregate the weights. Also, we can omit the cycles whose weights are composed entirely from the entries of $(U^*)^{\sharp}$, as these cycle weights do not depend on  $\lambda$ and are nonnegative. We are then left with the cycles that go through the node of Min in the bottom of the diagram and hence also through $j'$. Note that, as node $j'$ can appear in a cycle only once, there will be at most one occurrence of the second case, where all terms except for the first one come from the arcs in the lower part of the diagram (going to node $n+1$ of Min and back). All other $s_{ij}$ can be compressed to an entry of $(U^*)^{\sharp}$ by using inequality
\begin{equation*}
    u_{i_1i_2}^* + ... + u_{i_{k-1}i_k}^* \leq u_{i_1i_k}^*,
\end{equation*}
valid since $(U^*)^{\otimes (k-1)} = U^*$ for each $k > 1$. Thus we have to consider all $3$-cycles of the following form:
\begin{center}
    \begin{tikzpicture}
    \node[shape=circle,draw=black] (i) at (5,4) {$i$};
    \node[shape=circle,draw=black] (j) at (5,0) {$j'$};
    \node[shape=circle,draw=black] (1) at (10,2) {1};
    
    \path [->](i) edge node[above] {$w_1$} (1);
    \path [->](1) edge node[below] {$w_2$} (j);
    \path [->](j) edge node[left] {$ -u_{ij'}^*$} (i);
\end{tikzpicture}
\end{center}
Here $w_1 + w_2$ is one of the finite values in $\{g_i - p_{j'} + 2 \lambda,\;
    q_i + \lambda - g_{j'},\;
    h_i + \lambda - p_{j'},\; h_i-g_{j'}\}$, and $i$ is such that $u^*_{ij'}$ and one of these values are finite.
    
So we need to find the minimal $\lambda$ such that:
\begin{itemize}
\item[1.] $-u_{ij'}^* + q_i - p_{j'} + 2\lambda \geq 0$ for all $i$ and $j'$ such that $u_{ij'}^*$, $q_i$ and $p_{j'}$ are finite: this is equivalent to $\lambda \geq (q^-\otimes U^*\otimes p)^{\otimes \frac{1}{2}}$;
\item[2.] $-u_{ij'}^* + q_i + \lambda - g_{j'} \geq 0$ for all $i$ and $j'$ such that $u_{ij'}^*$, $q_i$ and $g_{j'}$ are finite: equivalent to $\lambda \geq q^-\otimes U^*\otimes g$;
\item[3.] $-u_{ij'}^* + h_i + \lambda -p_{j'} \geq 0$
for all $i$ and $j'$ such that $u_{ij'}^*$, $h_i$ and $p_{j'}$ are finite: equivalent to
$\lambda \geq h^-\otimes U^*\otimes p$;
\item[4.] $-u_{ij'}^* + h_i - g_{j'} \geq 0$ for all $i$ and $j'$ such that $u_{ij'}^*$, $h_i$ and $g_{j'}$ are finite: this is always satisfied by the problem assumption $U^*\otimes g\leq h$. 
\end{itemize}
 Hence the optimal value of $\lambda$ is equal to
$(q^-\otimes U^*\otimes p)^{\otimes \frac{1}{2}} \oplus h^-\otimes U^*\otimes p \oplus q^-\otimes U^*\otimes g$, as claimed.

We now deduce the representation of solution set (this part of the proof is similar to that of Krivulin~\cite{KrivPseudolin-14}, Theorem 6). The solution set is given by the same inequalities as in~\eqref{MPGEx0} but for the optimal value $\lambda=\theta$, so it is the (finite part of the) alcoved polyhedron described by the following inequalities:
\begin{equation}
\label{e:optsystem}
\begin{split}
& x^-\otimes p\leq\theta,\quad q^-\otimes x\leq \theta\\    
& g\leq x, \quad x\leq h,\\
& U\otimes x\leq x
\end{split}    
\end{equation}
The first inequality can be rewritten as $p\leq \theta\otimes x$ and the second inequality can be rewritten as $x\leq \theta\otimes q$, and then the first four inequalities of~\eqref{e:optsystem} can be merged into
\begin{equation}
\label{e:intineqs}    
g \oplus \theta^-\otimes p \leq x \leq \theta\otimes q \oplus' h. 
\end{equation}
It remains to prove that a finite $x$ satisfies~\eqref{e:intineqs} and $U\otimes x\leq x$ if and only if $x$ is as in~\eqref{e:solset-pseudolin}. 

Assume first that a finite $x$ satisfies~\eqref{e:intineqs} and $U\otimes x\leq x$. The latter inequality is equivalent to $U^*\otimes x=x$, and therefore from the right-hand side of~\eqref{e:intineqs} we have 
$U^*\otimes x\leq \theta\otimes q\oplus' h$ and, 
using Proposition~\ref{p:equiv}, 
$x\leq (U^*)^{\sharp}\otimes' (\theta\otimes q\oplus' h)$. As we also have $x\geq g\oplus\theta^-\otimes p$ from the left-hand side of~\eqref{e:intineqs}, we obtain that $x=U^*\otimes v$,
where $v=x$ satisfies
\begin{equation*}
g\oplus \theta^-\otimes p\leq x=v\leq (U^*)^{\sharp}\otimes' (\theta\otimes q\oplus' h),    
\end{equation*}
hence $x$ is in~\eqref{e:solset-pseudolin}.

Now assume that $x$ is as in~\eqref{e:solset-pseudolin}. Since $x=A^*\otimes v$ and $A\otimes A^*\leq A^*$, it satisfies $A\otimes x\leq x$. Since $v$ is finite, $x=U^*\otimes v$ is also finite. 
We also have $g\oplus \theta^-\otimes p\leq x$ since $U^*\otimes v\geq v$, and 
$x=U^*\otimes v\leq \theta\otimes q\oplus' h$ follows by Proposition~\ref{p:equiv}.

\section{Proofs of optimality and unboundedness certificates}
The proofs written below work both in pseudolinear and in pseudoquadratic case.

\subsection{Proof of Proposition~\ref{p:optcert}}
$\lambda^{*}$ is optimal if and only if $\Phi(\lambda^{*})=0$ and $\Phi(\lambda)<0$ for all $\lambda<\lambda^{*}$. By the left-hand side of~\eqref{PhiEquals}, $\Phi(\cdot)$ is a pointwise minimum of a finite number of continuous non-decreasing functions $\Phi_{\tau}(\cdot)$. Therefore, the above property implies that $\lambda^*$ is optimal if and only if there exists a strategy $\tau$ of Min such that $\Phi_{\tau}(\lambda^{*})=0$ and $\Phi_{\tau}(\lambda)<0$ for all $\lambda<\lambda^{*}$. We can further find $j$
and small enough $\epsilon$ such that   
$\Phi_{\tau}(\lambda)=\chi_j(A^{\sharp}_{\tau}B(\lambda))
=a\lambda+b$ 
for all $\lambda\in[\lambda^*-\epsilon,\lambda]$ and some $a>0$. 

Since $\chi_{j}(A^{\sharp}_{\tau}B(\lambda))$ is the maximal cycle mean (per turn) over all the cycles accessible from the node $j$ of Min in the reduced game defined by $A_{\tau}$ and $B$, having $\Phi_{\tau}(\lambda^{*})=0$ means that in the mean-payoff game defined by $A_{\tau}$ and $B(\lambda^{*})$ all cycles accessible from the node $j$ of Min have non-positive weight and at least one of them has zero weight. Furthermore, $\chi_j(A^{\sharp}_{\tau}B(\lambda))=a\lambda+b$ with some $a>0$ and $b$ for all $\lambda\in[\lambda^{*}-\epsilon, \lambda^{*}]$ if and only if all cycles of zero weight accessible from the node $j$ of Min contain one of the nodes of player Max that is not from the $[m]$ group on the left of Figure~\ref{f:MPG-pseudolin} or Figure~\ref{f:MPG-pseudoquad}. Thus we have deduced that existence of a strategy $\tau$ with the claimed properties is equivalent to the optimality of $\lambda^*$.

\subsection{Proof of Proposition~\ref{p:unbcert}}

The problem is unbounded if and only if $\Phi(\lambda)\geq 0$ for all $\lambda\in\R$. Since $\Phi(\lambda)=\max_{\sigma}\Phi^{\sigma}(\lambda)$, this condition is equivalent to the following:
\begin{equation}
\label{e:condhl1}
\forall\lambda\in\R\mbox{ } \exists\sigma\mbox{ }\text{s.t.}\mbox{ } \Phi^{\sigma}(\lambda)\geq 0,    
\end{equation}
which can occur if and only if  there exist $\sigma$ and $\lambda'\in\mathbb{R}$ such that $\Phi^{\sigma}(\lambda)=a$ for all $\lambda\leq\lambda'$ and some constant $a\geq 0$ since function $\Phi^{\sigma}(\lambda)$ is non-decreasing, piecewise-linear and continuous, which also yields that such $\sigma$ can make $\Phi(\lambda)\geq 0$ hold for all $\lambda\in\mathbb{R}$. Thus condition~\eqref{e:condhl1} can equivalently be written as:
\begin{equation*}
\exists\sigma\mbox{ }\text{s.t.} \mbox{ }\Phi^{\sigma}(\lambda)\geq 0 \mbox{ }\forall\lambda\in\R.
\end{equation*}
Now we use that $\Phi^{\sigma}(\lambda)=\min\limits_{i}\chi_i(A^{\sharp}B^{\sigma}(\lambda))=\min\limits_{i}\min\limits_{\tau}\Phi_{A,B(\lambda)}(i,\tau,\sigma)$ to rewrite the above condition as follows:
\begin{equation}
\label{e:condhl3}
\exists\sigma \mbox{ }\text{s.t.} \mbox{ }\forall i,\mbox{ }\forall\tau \mbox{ }\Phi_{A,B(\lambda)}(i,\tau,\sigma)\geq 0\mbox{ } \forall\lambda\in\mathbb{R},
\end{equation}
which is equivalent to saying that the weights of all cycles in
the digraph defined by $A$ and $B^{\sigma}(\lambda)$ are nonnegative and cannot contain $\lambda$. Obviously, this condition does not depend on the value of $\lambda$, which can be set to $0$ as in the claim.

Note that the weight of a cycle must contain $\lambda$ if and only if such cycle contains a node of Max, which is not from the $[m]$ group (on the left of Figure~\ref{f:MPG-pseudolin} or~\ref{f:MPG-pseudoquad}) since any other node of Max has unique outgoing arc with weight $\lambda$. Therefore, condition~\eqref{e:condhl3} holds if and only if any cycle is avoiding these nodes of Max and all cycles have a nonnegative weight.

\end{document}